\title{Beyond G\"ollnitz' Theorem I: A Bijective Approach}
\author{Isaac Konan}
\address{IRIF, Universit\'e de Paris, B\^atiment Sophie Germain, Case courrier 7014, 8 Place Aur\'elie Nemours, 75205 Paris Cedex 13, France}
\email{konan@irif.fr}
\date{}
\definecolor{foge}{rgb}{0.1, 0.6, 0.1}
\newcommand{\So}{\textbf{Step 1 }}
\newcommand{\N}{\mathbb{N}}
\newcommand{\Soo}{\textbf{Step 1}}
\newcommand{\St}{\textbf{Step 2 }}
\newcommand{\Stt}{\textbf{Step 2}}
\newcommand{\Thm}[1]{\textbf{Theorem \ref{#1}}}
\newcommand{\Lem}[1]{\textbf{Lemma \ref{#1}}}
\newcommand{\Prp}[1]{\textbf{Proposition \ref{#1}}}
\newcommand{\Sct}[1]{Section \ref{#1}}
\newcommand{\Od}{\mathcal{O}}
\newcommand{\C}{\mathcal{C}}
\newcommand{\Cc}{\mathcal{C}_\rtimes}
\newcommand{\Pp}{\mathcal{P}}
\newcommand{\Sc}{\mathcal{S}}
\newcommand{\E}{\mathcal{E}}
\newcommand{\Ee}{\mathcal{E}_1}
\newcommand{\Eee}{\mathcal{E}_2}
\newcommand{\la}{\lambda}
\numberwithin{equation}{section}
\newtheorem{theo}{Theorem}[section]
\newtheorem{prop}{Proposition}[section]
\newtheorem{lem}{Lemma}[section]
\newtheorem{cor}{Corollary}[section]
\begin{document}
\maketitle
\begin{abstract}
In 2003, Alladi, Andrews and Berkovich proved an identity for
partitions where parts occur in eleven colors: four primary colors,
six secondary colors, and one quaternary color.    Their work answered
a longstanding question of how to go beyond a classical theorem of
G\"ollnitz, which uses three primary and three secondary colors.
Their main tool was a deep and difficult four parameter $q$-series
identity. In this paper we take a different approach.   Instead of
adding an eleventh quaternary color, we introduce forbidden patterns
and give a bijective proof of a ten-colored partition identity lying
beyond G\"ollnitz' theorem.   Using a second bijection, we show that our
identity is equivalent to the identity of Alladi, Andrews, and
Berkovich. From a combinatorial viewpoint, the use of forbidden
patterns is more natural and leads to a simpler formulation.
In fact, in Part II of this series we will show how our method can be
used to go beyond G\"ollnitz' theorem to any number of primary colors.
\end{abstract}
\ytableausetup{centertableaux,boxsize=0.31cm}
\section{Introduction and Statements of Results}
\subsection{History}
A partition of a positive integer $n$ is a non-increasing sequence of positive integers whose sum is equal to $n$. For example, the partitions of $7$ are 
\[(7),(6,1),(5,2),(5,1,1),(4,3),(4,2,1),(4,1,1,1),(3,3,1),(3,2,2),(3,2,1,1),\]
\[(3,1,1,1,1),(2,2,2,1),(2,2,1,1,1),(2,1,1,1,1,1)\,\,\text{and}\,\,(1,1,1,1,1,1,1)\,\cdot\] 
The study of partition identities has a long history, dating back to
Euler's proof that there are as many partitions of $n$ into distinct parts as partitions of $n$ into odd parts. The corresponding identity is 
\begin{equation}
(-q;q)_\infty = \frac{1}{(q;q^2)_\infty}\,,
\end{equation}
where \[(x;q)_m = \prod_{k=0}^{m-1} (1-xq^k)\,\,,\]
for  any $m\in \mathbb{Z}_{\geq 0}\cup \{\infty\}$ and $x,q$ such that $|q|<1$.
\\\\One of the most important identities in the theory of partitions is
Schur's theorem \cite{Sc26}.
\begin{theo}[Schur 1926]\label{scr}
For any positive integer $n$, the number of partitions of $n$ into distinct parts congruent to $\pm 1 \mod 3$ is equal to the number of partitions of $n$ where parts differ by at least three and multiples of three differ by at least six.
\end{theo}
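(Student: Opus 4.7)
The plan is to give a bijective proof by matching partitions in $S(n)$, the partitions of $n$ into distinct parts $\equiv \pm 1 \pmod 3$, to partitions in $T(n)$, the partitions of $n$ satisfying the Schur difference conditions. I would construct an explicit merge--split bijection $\Phi\colon S(n)\to T(n)$, in the spirit of Bressoud's classical proof.

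The foundational observation driving the bijection is that in a partition $\lambda=(\lambda_1>\cdots>\lambda_s)\in S(n)$, any two consecutive parts with gap $\lambda_i-\lambda_{i+1}\in\{1,2\}$ must have residues $\{1,2\}\pmod 3$: a shared residue would force the gap to be a positive multiple of $3$, hence at least $3$. Therefore such a close pair sums to a multiple of $3$. The map $\Phi$ would iteratively detect close pairs and replace each one by its sum, thereby introducing a part congruent to $0\pmod 3$. The inverse $\Phi^{-1}$ takes $\mu\in T(n)$ and splits each multiple-of-$3$ part $3k$ into $a+b$ with $a\equiv 1$, $b\equiv 2\pmod 3$, the choice of split being dictated by the sizes of the neighbors of $3k$ in $\mu$.

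The main obstacle will be setting up the merging procedure so that the output truly lies in $T(n)$, and verifying that the splitting in $\Phi^{-1}$ is unambiguous. A naive local scan can fail: merging a pair in the middle of $\lambda$ may produce a fresh multiple of $3$ sitting at distance $1$ or $2$ from a large un-merged neighbor, and that neighbor no longer has residue compatible with a further merge. I would therefore organize the merges by first identifying maximal ``blocks'' of parts whose successive gaps are all at most $2$, then applying a deterministic rule inside each block. Once this is done, two facts remain to check by a small case analysis modulo $3$: first, that in the image the consecutive parts differ by at least $3$ and consecutive multiples of $3$ differ by at least $6$; and second, that the condition ``multiples of $3$ differ by at least $6$'' in $T(n)$ provides exactly the slack needed to split each $3k$ uniquely as $a+b$ without creating a repeated part or a second multiple of $3$. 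Once this local analysis is carried through, $\Phi$ and $\Phi^{-1}$ are weight-preserving by construction, and an induction on the number of parts shows they are mutually inverse, which establishes Schur's theorem.
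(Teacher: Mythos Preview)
The paper does not itself prove Theorem~\ref{scr}; it is quoted as background, with Bressoud's bijection cited and with the remark that Schur's theorem follows from the Alladi--Gordon colored refinement (Theorem~\ref{ag}) via the dilation $q\mapsto q^3$, $a,b\mapsto q^{-2},q^{-1}$. Your proposal is exactly a direct Bressoud-type bijection, so it aligns with what the paper cites and with what the paper's own machinery in Section~\ref{sct3} reduces to when restricted to two primary colors.

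One caveat: Bressoud's mechanism is not block-based. In the paper's $\Phi$ (Section~\ref{sct3}) one repeatedly merges the leftmost troublesome primary pair into a secondary part and then performs \emph{crossing} moves $(\lambda_i,\lambda_{i+1})\mapsto(\lambda_{i+1}+1,\lambda_i-1)$ to push that secondary part upward past any primary neighbor that remains too close; this crossing step is precisely what resolves the situation you worry about (a fresh multiple of~$3$ landing at distance $1$ or $2$ from an un-merged neighbor) and is what makes the map invertible by $\Psi$. Your alternative---maximal small-gap blocks with an unspecified deterministic pairing rule---could perhaps be made to work, but as stated it leaves open which part of an odd-length block is left unpaired and why merged parts arising from adjacent blocks end up at distance~$\geq 3$ (and two multiples of~$3$ at distance~$\geq 6$). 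Replacing the block heuristic with the merge--cross loop closes those gaps at once.
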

There have been a number of proofs of Schur's result over the years,
including a $q$-difference equation proof of Andrews \cite{AN68} and a simple
bijective proof of Bressoud \cite{BR80}.
\\\\Another important identity is G\"ollnitz' theorem \cite{GO67}.
\begin{theo}[G\"ollnitz 1967]\label{goll}
For any positive integer $n$, the number of partitions of $n$ into distinct parts  congruent to $2,4,5 \mod 6$ is equal to the number of partitions of $n$ into parts  different from $1$ and $3$, and where parts differ by at least six with equality only if parts are congruent to $2,4,5 \mod 6$.
\end{theo}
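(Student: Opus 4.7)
The plan is to employ the Alladi--Gordon method of weighted words. First I would generalize the statement by attaching colors to parts: introduce three primary colors $a, b, c$ and three secondary colors $ab, ac, bc$, equipped with a linear order (say $a < b < c$, with secondary colors inheriting an order from their primary components). One then considers, on one hand, colored partitions into distinct primary-colored parts, and on the other hand colored partitions whose parts satisfy refined color-dependent difference conditions. The goal is to prove a colored identity that, upon the dilation $q \mapsto q^6$ together with the substitutions $a \mapsto q^{-4}$, $b \mapsto q^{-2}$, $c \mapsto q^{-1}$, specializes to G\"ollnitz' theorem.

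The heart of the argument is an explicit bijection between the two colored sets, built by greedy merging. Given a partition into distinct primary-colored parts, scan from smallest to largest; whenever two consecutive parts $m_x$ and $n_y$ with $x, y$ distinct primary colors violate the target gap condition, merge them into a single part $(m+n)_{xy}$ of the corresponding secondary color. Total weight is preserved and the number of parts strictly decreases, so the process terminates. The inverse reads secondary-colored parts and splits each $k_{xy}$ back into its two primary constituents, the decomposition being determined uniquely by the local configuration.

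Under the specialization above, the primary colors encode residues $2, 4, 5 \pmod 6$, while the secondary colors $ab, ac, bc$ encode residues $6, 7, 9$, i.e., $0, 1, 3 \pmod 6$. The prohibition of parts equal to $1$ or $3$ appears automatically because the smallest parts of colors $ac$ and $bc$ are $2 + 5 = 7$ and $4 + 5 = 9$, respectively. Similarly, the condition ``gap at least $6$, with equality only for parts $\equiv 2, 4, 5$'' emerges because only primary-colored parts lying in the same residue class are permitted to sit exactly $6$ apart after dilation, all other adjacencies being forced strictly larger by the refined colored gap condition.

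The main obstacle will be setting up the color ordering and the precise refined gap condition so that the greedy merge is both well-defined (no ambiguity in which adjacent pair to combine first) and invertible. This reduces to a finite case analysis across the six colors, verifying that merges do not cascade in an ill-controlled way, that a newly created secondary-colored part still respects the refined conditions with both of its neighbors, and that the termination condition matches G\"ollnitz' gap condition exactly. Once the colored identity is established combinatorially, the dilation step is routine, and the theorem follows.
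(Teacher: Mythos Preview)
Your proposal is correct and follows precisely the route the paper sketches: the paper does not give a self-contained proof of G\"ollnitz' theorem but derives it from the Alladi--Andrews--Gordon colored refinement (Theorem~\ref{aag}) via exactly the dilation $q\mapsto q^6$, $a,b,c\mapsto q^{-4},q^{-2},q^{-1}$ that you describe, and it cites Bressoud-style bijections \cite{PRS04,ZJ15} for the colored identity. Your greedy merge/split bijection is the three-color instance of the machine $\Phi/\Psi$ that the paper develops in full detail for four primary colors in Sections~\ref{sct3}--\ref{sct4}, so the approach is essentially identical.
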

Like Schur's theorem, G\"ollnitz's identity can be proved using
$q$-difference equations \cite{AN69} and an elegant Bressoud-style bijection \cite{PRS04,ZJ15}.
\\\\
Seminal work of Alladi, Andrews, and Gordon in the 90's showed how the
theorems of Schur and G\"ollnitz emerge from more general results on
colored partitions \cite{AAG95}. 
\\\\In the case of Schur's theorem, we
consider parts in three colors $\{a,b,ab\}$ and order them as follows:
\begin{equation}
1_{ab}<1_a<1_b<2_{ab}<2_a<2_b<3_{ab}<\cdots\,\cdot
\end{equation}
We then consider the partitions with colored parts different from $1_{ab}$ and satisfying the minimal difference conditions in the table 
\begin{equation}\label{scrrr}
\begin{array}{|c|cc|c|}
\hline
_{\lambda_i}\setminus^{\lambda_{i+1}}&a&b&ab\\
\hline
a&1&2&1\\
b&1&1&1\\
\hline
ab&2&2&2\\
\hline
\end{array}\,\cdot
\end{equation}
Here, the part $\la_i$ with color in the row and the part $\la_{i+1}$ with color in the column differ by at least the corresponding entry in the table.  
An example of such a partition is $(7_{ab},5_{b},4_{a},3_{ab},1_b)$. The Alladi-Gordon refinement of Schur's partition theorem \cite{AG93} is stated as follows:
\begin{theo}\label{ag}
Let $u,v,n$ be non-negative integers. Denote by $A(u,v,n)$ the number of partitions of $n$ into $u$ distncts parts with color $a$ and $v$ distinct parts with color $b$, and denote by $B(u,v,n)$ the number of partitions of $n$ satisfying the conditions in \eqref{scrrr}, with $u$ parts with color $a$ or $ab$, and $v$ parts with color $b$ or $ab$. We then have  $A(u,v,n)=B(u,v,n)$ and the identity
\begin{equation}
\sum_{u,v,n\geq 0} B(u,v,n)a^ub^vq^n = \sum_{u,v,n\geq 0} A(u,v,n)a^ub^vq^n = (-aq;q)_\infty (-bq;q)_\infty\,\cdot
\end{equation}
\end{theo}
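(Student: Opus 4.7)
The generating function identity $\sum A(u,v,n)a^ub^vq^n = (-aq;q)_\infty(-bq;q)_\infty$ is immediate from Euler's expansion $(-xq;q)_\infty = \sum_{u\geq 0} x^u q^{u(u+1)/2}/(q;q)_u$: applied to $x=a$ and $x=b$ and multiplied out, it is the generating function for pairs $(\mu,\nu)$ consisting of a partition with $u$ distinct $a$-parts and a partition with $v$ distinct $b$-parts, of total sum $n$. The substance of the theorem is therefore the identity $A(u,v,n)=B(u,v,n)$, which I would prove by constructing an explicit weight-preserving bijection $\Phi$ between the two families.

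The key observation is that table~\eqref{scrrr} forbids exactly two types of adjacency between an $a$-part and a $b$-part of comparable numerical size: a pair $(k+1)_a,k_b$ in left-to-right order (numerical difference $1$, violating $a,b\geq 2$), and a pair $k_b,k_a$ (numerical difference $0$, violating $b,a\geq 1$). All other adjacencies of pure $a$- and $b$-parts are automatic from distinctness within each color.

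The bijection $\Phi$ would take $(\mu,\nu)$, first interleave the $a$- and $b$-parts into a single non-increasing sequence under the total order $\cdots<k_a<k_b<\cdots$, and then repeatedly merge any adjacent $a$-part and $b$-part of numerical sizes $\alpha,\beta$ with $\alpha-\beta\in\{0,1\}$ into a single $ab$-part of size $\alpha+\beta$. The procedure terminates when no such mergeable pair remains. The inverse $\Phi^{-1}$ takes a $B$-partition and splits each $ab$-part $m_{ab}$ into the consecutive pair $\lceil m/2\rceil_a,\lfloor m/2\rfloor_b$ (which differ in value by $0$ or $1$), and re-inserts these into the sequence in the proper interleaved position.

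The main obstacle is a case analysis showing that $\Phi$ and $\Phi^{-1}$ are mutual inverses: namely, that a merge never creates a new violation with neighbouring parts, using the remaining entries of table~\eqref{scrrr} for adjacencies involving $ab$ (that is, $ab,ab\geq 2$, $ab,a\geq 2$, $ab,b\geq 2$, $a,ab\geq 1$, and $b,ab\geq 1$); and conversely, that splitting each $ab$-part of a $B$-partition globally reproduces distinct $a$-parts and distinct $b$-parts without introducing any forbidden adjacency. Preservation of $(u,v,n)$ is then clear, since each merge exchanges one $a$- and one $b$-part for one $ab$-part, leaving the $a$-or-$ab$ count $u$, the $b$-or-$ab$ count $v$, and the numerical sum all invariant.
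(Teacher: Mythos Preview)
Your identification of the two forbidden adjacencies is correct, and the idea of merging consecutive primary pairs into $ab$-parts is exactly the Bressoud mechanism that the paper adapts. But $\Phi$ as you describe it has a genuine gap: merging alone does not always yield a valid $B$-partition, because the newly created $ab$-part may exceed the primary part immediately to its left.

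Take $(\mu,\nu)=((5,3),(3))$, interleaved as $5_a,3_b,3_a$. The pair $(5_a,3_b)$ is already legal ($5-3=2$), so the only mergeable pair is $(3_b,3_a)$, producing the sequence $5_a,6_{ab}$. This is out of order; re-sorting to $6_{ab},5_a$ still violates the entry $ab,a\geq 2$ of table~\eqref{scrrr}. Your inverse fails symmetrically: in the legitimate $B$-partition $(7_{ab},4_a)$ you split $7_{ab}\mapsto 4_a,3_b$, and re-insertion yields $4_a,4_a,3_b$ with a repeated $a$-part.

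What is missing is a \emph{crossing} step: whenever a merge leaves a primary part $\lambda_{i}$ followed by a secondary part $\lambda_{i+1}$ with $\lambda_i\not\gg\lambda_{i+1}$, one replaces $(\lambda_i,\lambda_{i+1})$ by $(\lambda_{i+1}+1,\lambda_i-1)$ and iterates until the order is restored. In the example, $5_a,6_{ab}$ crosses to $7_{ab},4_a$. The inverse needs the symmetric crossing before splitting. This is precisely \textbf{Step~2} of $\Phi$ and \textbf{Step~1} of $\Psi$ in Section~\ref{sct3}; the case analysis you anticipate is carried out in Section~\ref{sct4}, but for the algorithm \emph{with} crossing, not for bare merging. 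Without crossing, the ``case analysis showing that a merge never creates a new violation'' that you defer to would in fact fail at the very first instance.
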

Note that a transformation implies Schur's theorem :
\begin{equation}
\left\lbrace 
\begin{array}{l r c l }
\text{dilation :} &q &\mapsto&q^3\\
\text{translations :} &a,b &\mapsto&q^{-2},q^{-1}\\
\end{array}
\right. \,\cdot
\end{equation}
In fact, the minimal difference conditions given in \eqref{scrrr} give after these transformations the minimal differences in Schur's theorem. 
\\\\In the case of G\"ollnitz' theorem,
we consider parts that occur in six colors $\{a,b,c,ab,ab,bc\}$ with the order
\begin{equation}
1_{ab}<1_{ac}<1_a<1_{bc}<1_b<1_c<2_{ab}<2_{ac}<2_a<2_{bc}<2_b<2_c<3_{ab}<\cdots\,,
\end{equation}
and the partitions with colored parts different from $1_{ab},1_{ac},1_{bc}$ and satisfying the minimal difference conditions in 
\begin{equation}\label{eq:aag}
\begin{array}{|c|ccc|ccc|}
\hline
_{\lambda_i}\setminus^{\lambda_{i+1}}&a&b&c&ab&ac&bc\\
\hline
a&1&2&2&1&1&2\\
b&1&1&2&1&1&1\\
c&1&1&1&1&1&1\\
\hline
ab&2&2&2&2&2&2\\
ac&2&2&2&1&2&2\\
bc&1&2&2&1&1&2\\
\hline
\end{array}\,\cdot
\end{equation}
The Alladi-Andrews-Gordon refinement of G\"ollnitz's partition theorem can be stated as follows:
\begin{theo}\label{aag}
Let $u,v,w,n$ be non-negative integers. Denote by $A(u,v,w,n)$ the number of partitions of $n$ into $u$ distncts parts with color $a$, $v$ distinct parts with color $b$ and $w$ distinct parts with color $c$, and denote by $B(u,v,w,n)$ the number of partitions of $n$ satisfying the conditions in \eqref{eq:aag}, with $u$ parts with color $a,ab$ or $ac$, $v$ parts with color $b,ab$ or $bc$ and $w$ parts with color $c,ac$ or $bc$. We then have  $A(u,v,w,n)=B(u,v,w,n)$ and the identity
\begin{equation}
\sum_{u,v,w,n\geq 0} B(u,v,w,n)a^ub^vc^wq^n = \sum_{u,v,w,n\geq 0} A(u,v,w,n)a^ub^vc^wq^n = (-aq;q)_\infty (-bq;q)_\infty(-cq;q)_\infty\,\cdot
\end{equation}
\end{theo}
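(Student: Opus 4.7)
The plan is to extend the bijective strategy behind Theorem \ref{ag} to three primary colors. The $A$-side generating function
\[\sum_{u,v,w,n \geq 0} A(u,v,w,n)\,a^u b^v c^w q^n = (-aq;q)_\infty (-bq;q)_\infty (-cq;q)_\infty\]
follows immediately from Euler's theorem applied to each color class, since an $A$-partition is just a triple $(\pi_a,\pi_b,\pi_c)$ of partitions into distinct parts with independently-colored entries. It therefore suffices to construct a weight-preserving bijection $\Phi : \mathcal{A}(u,v,w,n) \to \mathcal{B}(u,v,w,n)$.

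I would define $\Phi$ by a greedy merging procedure. Given $(\pi_a, \pi_b, \pi_c)$, interleave all parts according to the total order listed just before \eqref{eq:aag}, and scan the resulting sequence from largest to smallest. Whenever two consecutive parts violate the gap prescribed by \eqref{eq:aag}, replace them by a single part whose color is the concatenation of the two primary colors (so $a$ and $b$ produce color $ab$, and so on) and whose size is chosen to preserve the total weight. The inverse $\Phi^{-1}$ then splits each secondary-colored part back into an ordered pair of primary-colored parts, recovering three partitions into distinct parts in colors $a$, $b$, $c$ respectively.

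The key verifications, in order, are: (a) a single top-down sweep stabilizes, so that no new violation of \eqref{eq:aag} is introduced at the merged part's interface with its neighbors; (b) $\Phi^{-1}$ always outputs strictly decreasing sequences within each color class, ensuring its image lies in $\mathcal{A}$; (c) the output of $\Phi$ is never $1_{ab}$, $1_{ac}$, or $1_{bc}$, which follows from the size chosen when the smallest parts are merged; and (d) $\Phi$ preserves the statistics $u,v,w,n$, which is transparent because merging a primary-$x$ part with a primary-$y$ part into a secondary $xy$-colored part contributes $1$ to both the $x$-count and the $y$-count.

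The main obstacle will be the combinatorial bookkeeping for the full six-by-six table \eqref{eq:aag}, in particular the interaction of an already-secondary-colored part (such as one of color $ab$) with a primary neighbor (such as one of color $c$). In the two-color setting of Theorem \ref{ag} only four cases arise and stabilization after one sweep is almost immediate, whereas here many adjacency cases must be checked consistently, and one must verify that no cascade of further merges is triggered. This is precisely the combinatorial difficulty which, when pushed one step further to four primary colors, led Alladi--Andrews--Berkovich to introduce an eleventh quaternary color, and which motivates the forbidden-pattern formulation of the present paper.
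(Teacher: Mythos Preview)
Your outline has a genuine gap: the claim in (a) that ``a single top-down sweep stabilizes'' is false. When two consecutive primary parts $\lambda_i,\lambda_{i+1}$ are merged into a secondary part of size $\lambda_i+\lambda_{i+1}$, that new part is typically \emph{larger} than the primary part $\lambda_{i-1}$ just above it, so the pair $(\lambda_{i-1},\,\lambda_i+\lambda_{i+1})$ violates \eqref{eq:aag}. For a concrete three-color example, take $5_c,3_a,2_c,2_b$: the first troublesome pair is $(3_a,2_c)$, which merges to $5_{ac}$, and now $5_c,5_{ac}$ fails the required gap of $1$ in the table. Since $5_c$ is primary and $5_{ac}$ is secondary, you cannot ``merge again''; a different repair is needed.

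What is missing is precisely the second ingredient of Bressoud's algorithm: after each merge one must repeatedly \emph{cross} the new secondary part with the primary parts above it, via $(\lambda_{i-1},\,\mu)\mapsto(\mu+1,\,\lambda_{i-1}-1)$, until the gap condition is restored. The inverse map likewise requires a crossing phase before each split, and the split itself must be specified (into the upper and lower halves $\alpha,\beta$ of the secondary part, not an arbitrary pair). This is exactly the two-step machine $\Phi$ described in Section~\ref{sct3} of the paper for four colors; specialising it to $d=0$ gives the bijection you want, and Theorem~\ref{aag} is recovered from Theorem~\ref{th2} in that way (see also \cite{ZJ15} for a direct treatment). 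Your points (c) and (d) are fine, but (a) and (b) cannot be established for the merge-only procedure you describe.
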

Note that a transformation implies G\"ollnitz' theorem :
\begin{equation}
\left\lbrace 
\begin{array}{l r c l }
\text{dilation :} &q &\mapsto&q^6\\
\text{translations :} &a,b,c &\mapsto&q^{-4},q^{-2},q^{-1}\\
\end{array}
\right. \,\cdot
\end{equation}
\\Observe that while Schur's theorem is not a direct corollary of
G\"ollnitz' theorem, \Thm{ag} \emph{is} implied by \Thm{aag} by
setting $c= 0$.    Therefore G\"ollnitz' theorem may be viewed as a level
higher than Schur's theorem, since it requires three primary colors
instead of two.
\\\\Following the work of Alladi, Andrews, and Gordon, it was an open
problem to find a partition identity beyond G\"ollnitz' theorem, in the
sense that it would arise from four primary colors.    This was
famously solved by Alladi, Andrews, and Berkovich \cite{AAB03}. To describe their result, 
we consider parts that occur in eleven colors $\{a,b,c,d,ab,ac,ad,bc,bd,cd,abcd\}$ and ordered as follows:
\begin{equation}\label{quat}
1_{abcd}<1_{ab}<1_{ac}<1_{ad}<1_a<1_{bc}<1_{bd}<1_b<1_{cd}<1_c<1_d<2_{abcd}<\cdots\,\cdot
\end{equation}
Let us consider the partitions with the size of the secondary parts greater than one and satisfying the minimal difference conditions in 
\begin{equation}\label{tab2}
\begin{array}{|c|cccc|ccc|cc|c|}
\hline
_{\lambda_i}\setminus^{\lambda_{i+1}}&ab&ac&ad&a&bc&bd&b&cd&c&d\\
\hline
ab&2&2&2&2&2&2&2&2&2&2\\
ac&1&2&2&2&2&2&2&2&2&2\\
ad&1&1&2&2&2&2&2&2&2&2\\
a&1&1&1&1&2&2&2&2&2&2\\
\hline
bc&1&1&1&1&2&2&2&2&2&2\\
bd&1&1&1&1&1&2&2&2&2&2\\
b&1&1&1&1&1&1&1&2&2&2\\
\hline
cd&1&1&1&1&1&1&1&2&2&2\\
c&1&1&1&1&1&1&1&1&1&2\\
\hline
d&1&1&1&1&1&1&1&1&1&1\\
\hline
\end{array}\,,
\end{equation}
and such that parts with color $abcd$ differ by at least $4$, and the smallest part with color $abcd$ is at least equal to  $4+2\tau-\chi(1_a\text{ is a part})$, where $\tau$ is the number of primary and secondary parts in the partition.
The theorem is then stated as follows.
\begin{theo}\label{th2}
Let $u,v,w,t,n$ be non-negative integers. Denote by $A(u,v,w,t,n)$ the number of partitions of $n$ into $u$ distncts parts with color $a$, $v$ distinct parts with color $b$, $w$ distinct parts with color $c$ and $t$ distinct parts with color $d$, and denote by $B(u,v,w,t,n)$ the number of partitions of $n$ satisfying the conditions in \eqref{tab2}, with $u$ parts with color $a,ab,ac,ad$ or $abcd$, $v$ parts with color $b,ab,bc,bd$ or $abcd$, $w$ parts with color $c,ac,bc,cd$ or $abcd$ and $t$ parts with color $d,ad,bd,cd$ or $abcd$.
We then have $A(u,v,w,t,n)=B(u,v,w,t,n)$ and the identity
\begin{equation}
\sum_{u,v,w,t,n\geq 0} B(u,v,w,t,n)a^ub^vc^wd^tq^n =
(-aq;q)_\infty (-bq;q)_\infty(-cq;q)_\infty(-dq;q)_\infty\,\cdot
\end{equation}
\end{theo}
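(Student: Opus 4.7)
The plan is to proceed in the two bijective stages anticipated in the abstract. First, I would reformulate the theorem in terms of only ten colors, keeping the four primary and six secondary colors but replacing the quaternary color $abcd$ by a prescribed family of two-part \emph{forbidden patterns} among adjacent secondary parts whose colors collectively use $\{a,b,c,d\}$. Writing $B'(u,v,w,t,n)$ for the number of partitions with parts coloured from $\{a,b,c,d,ab,ac,ad,bc,bd,cd\}$, with secondary parts of size at least $2$, satisfying the ten-column restriction of \eqref{tab2}, and avoiding the forbidden patterns, I would establish the intermediate identity
\begin{equation*}
\sum_{u,v,w,t,n\ge 0} B'(u,v,w,t,n)\, a^u b^v c^w d^t q^n = (-aq;q)_\infty(-bq;q)_\infty(-cq;q)_\infty(-dq;q)_\infty
\end{equation*}
by a direct bijection $\Phi$ with partitions into distinct parts in the four primary colors.

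The construction of $\Phi$ should extend the Bressoud--Alladi--Gordon insertion used for Schur (\Thm{ag}) and G\"ollnitz (\Thm{aag}). One processes primary parts in decreasing size, and whenever the next part to be inserted would violate a difference from \eqref{tab2}, it is merged with its neighbour via a compression of type primary $+$ primary $\to$ secondary (of color equal to the product of the two colours). The inverse ``explodes'' each secondary part into its two constituent primaries, from the bottom of the partition upward. The new phenomenon compared to the three-primary case is that successive compressions can produce adjacent secondary parts whose colours together cover $\{a,b,c,d\}$; the algorithm must be designed so that exactly those configurations declared forbidden never occur, while all other secondary--secondary adjacencies remain allowed.

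The second stage is a bijection $\Psi$ between the partitions counted by $B'$ and those counted by $B$: each forbidden pattern $(\la_i,\la_{i+1})$ is collapsed into a single part of colour $abcd$ and size $\la_i+\la_{i+1}$. The gap-$4$ condition between quaternary parts then arises because each secondary part in the original pattern contributes a gap of at least $2$, and the bound $4+2\tau-\chi(1_a\text{ is a part})$ on the smallest quaternary part records the weight needed to sit above the $\tau$ primary and secondary parts remaining below the collapsed pair, with the correction $-\chi(1_a\text{ is a part})$ coming from the boundary case in which the minimal primary part $1_a$ interacts with the pattern. Composing $\Phi$ with $\Psi$ yields the desired size- and color-preserving bijection between the partitions enumerated by $A(u,v,w,t,n)$ and those enumerated by $B(u,v,w,t,n)$.

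The main obstacle is the first stage, and more precisely the identification of the family of forbidden patterns. It must be small enough that every quaternary configuration of \Thm{th2} corresponds under $\Psi$ to a unique forbidden pattern, and large enough that the insertion $\Phi$ never produces one. Verifying this simultaneously requires a detailed case analysis on the entries of the ten-column restriction of \eqref{tab2}, together with careful bookkeeping at the bottom of the partition in order to recover the character $\chi(1_a\text{ is a part})$ in the size bound for the smallest quaternary part. I expect the cleanest organisation to be to present $\Phi$ and $\Phi^{-1}$ as compositions of local moves and verify the commutation relations between them, rather than describing the transformation one pass at a time.
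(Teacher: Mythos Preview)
Your two-stage outline matches the paper's strategy, but the mechanics of the intermediate class are inverted, and the proposal as written is internally inconsistent.

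The intermediate set of ten-coloured partitions does \emph{not} use the ten-column restriction of \eqref{tab2}. The paper instead \emph{lowers} two of those entries --- the entry $(ad,bc)$ from $2$ to $1$ and the entry $(cd,ab)$ from $1$ to $0$ --- obtaining the modified table~\eqref{tab1}. This creates new two-part configurations $((k+1)_{ad},k_{bc})$ and $(k_{cd},k_{ab})$ that are \emph{allowed} in the intermediate class $\Ee$ but violate \eqref{tab2}. It is precisely these \emph{newly allowed pairs}, not forbidden ones, that are collapsed into quaternary parts in the second bijection. Your description has the $B'$-partitions ``avoiding the forbidden patterns'' and then proposes to ``collapse each forbidden pattern'' in the map $\Psi$; these two clauses are mutually exclusive, so the second stage as you describe it is empty.

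Moreover, the genuine forbidden patterns in the paper are \emph{three}-part patterns of the shape (secondary, secondary, primary), namely $((k+2)_{cd},(k+2)_{ab},k_c)$, $((k+2)_{cd},(k+2)_{ab},k_d)$, and $((k+2)_{ad},(k+1)_{bc},k_a)$, with the single exception $(3_{ad},2_{bc},1_a)$ allowed. Their role is not to encode quaternary parts at all: they guarantee that once a pair $(M,m)\in\{((k+1)_{ad},k_{bc}),(k_{cd},k_{ab})\}$ is summed to a quaternary part and slid leftward past the surviving primary and secondary parts, the residual ten-coloured partition still satisfies \eqref{tab2}. The three-part constraint records exactly the minimal gap needed between such a pair and the part immediately below it, and the exceptional allowance of $(3_{ad},2_{bc},1_a)$ is what produces the correction $-\chi(1_a\text{ is a part})$ in the bound on the smallest quaternary part.

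So the architecture you propose is right, but ``restrict table \eqref{tab2} and forbid two-part patterns'' must be replaced by ``loosen two entries of \eqref{tab2} and forbid certain three-part tails''; the collapse in the second stage then acts on the loosened pairs, while the forbidden triples are what make that collapse bijective.
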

Note that the result of Alladi-Andrews-Berkovich  uses four primary colors, the full set of
secondary colors, along with one quaternary color $abcd$. When $d=0$, 
we recover \Thm{aag}. Their main tool was a difficult $q$-series
identity:
\begin{align}\label{identity}
\sum_{i,j,k,l-constraints}&
\frac{q^{T_{\tau}+T_{AB}+T_{AC}+T_{AD}+T_{BC}+T_{BD}+T_{CD}-BC-BD-CD+4T_{Q-1}+3Q+2Q\tau}}{
(q)_A(q)_B(q)_C(q)_D(q)_{AB}(q)_{AC}(q)_{AD}(q)_{BC}(q)_{BD}(q)_{CD}(q)_Q}\nonumber\\
\cdot\{&(1-q^A) + q^{A+BC+BD+Q}(1-q^B) + q^{A+BC+BD+Q+B+CD}\}\nonumber\\
=& \quad \frac{q^{T_i+T_j+T_k+T_l}}{(q)_i(q)_j (q)_k(q)_l}
\end{align}
where $A,B,C,D,AB,AC,AD,BC,BD,CD,Q$ are variables which count the number of parts with respectively 
color $a,b,c,d,ab,ac,ad,bc,bd,cd,abcd$, 
\[
\left\lbrace
\begin{array}{l}
i=A+AB+AC+AD+Q\\
j=B+AB+BC+BD+Q\\
k=C+AC+BC+CD+Q\\
l=D+AD+BD+CD+Q\\
\tau = A+B+C+D+AB+AC+AD+BC+BD+CD
\end{array}\right.\,,
\]
$T_n = \frac{n(n+1)}{2}$ is the $n^{th}$ triangular number and $(q)_n = (q;q)_n$. 
While this
identity is difficult to prove, it is relatively straightforward to
show that it is equivalent to the statement in \Thm{th2}.
\\\\In this paper we give a bijective proof of \Thm{th2} (and therefore a bijective proof of the identity \eqref{identity}).   Our
proof is divided into two steps.  First we prove \Thm{th1} below,
which arises more naturally from our methods than \Thm{th2}.
Instead of adding a quaternary color, we lower certain minimum
differences and add some forbidden patterns.  Then, we show how
\Thm{th1} is equivalent to \Thm{th2}.

\subsection{Statement of Results}
Suppose that the parts occur in only primary colors $a,b,c,d$ and secondary colors $ab,ac,ad,bc,bd,cd$, and are ordered as in \eqref{quat} by omitting quaternary parts:
\begin{equation}\label{cons}
1_{ab}< 1_{ac}< 1_{ad} < 1_{a}< 1_{bc} < 1_{bd}< 1_b< 1_{cd}< 1_c<1_d< 2_{ab}<\cdots\,\cdot
\end{equation}
Let us now consider the partitions with the size of the secondary parts greater than one and satisfying the minimal difference conditions in
\begin{equation}\label{tab1}
\begin{array}{|c|cccc|ccc|cc|c|}
\hline
_{\lambda_i}\setminus^{\lambda_{i+1}}&ab&ac&ad&a&bc&bd&b&cd&c&d\\
\hline
ab&2&2&2&2&2&2&2&2&2&2\\
ac&1&2&2&2&2&2&2&2&2&2\\
ad&1&1&2&2&\underline{1}&2&2&2&2&2\\
a&1&1&1&1&2&2&2&2&2&2\\
\hline
bc&1&1&1&1&2&2&2&2&2&2\\
bd&1&1&1&1&1&2&2&2&2&2\\
b&1&1&1&1&1&1&1&2&2&2\\
\hline
cd&\underline{0}&1&1&1&1&1&1&2&2&2\\
c&1&1&1&1&1&1&1&1&1&2\\
\hline
d&1&1&1&1&1&1&1&1&1&1\\
\hline
\end{array}\,,
\end{equation} 
and which avoid the forbidden patterns 
\begin{equation}
((k+2)_{cd},(k+2)_{ab},k_c),((k+2)_{cd},(k+2)_{ab},k_d), ((k+2)_{ad},(k+1)_{bc},k_{a})\,,
\end{equation}
except the pattern $(3_{ad},2_{bc},1_a)$ which is allowed. An example of such a partition is 
\[(11_{ad},10_{bc},8_a,7_{cd},7_{ab},4_c,3_{ad},2_{bc},1_a)\,\cdot\]
We can now state the main theorem of this paper.
\begin{theo}\label{th1}
Let $u,v,w,t,n$ be non-negative integers. Denote by $A(u,v,w,t,n)$ the number of partitions of $n$ into $u$ distncts parts with color $a$, $v$ distinct parts with color $b$, $w$ distinct parts with color $c$ and $t$ distinct parts with color $d$, and denote by $B(u,v,w,t,n)$ the number of partitions of $n$ satisfying the conditions above, with $u$ parts with color $a,ab,ac$ or $ad$, $v$ parts with color $b,ab,bc$ or $bd$, $w$ parts with color $c,ac,bc$ or $cd$ and $t$ parts with color $d,ad,bd$ or $cd$.
We then have $A(u,v,w,t,n)=B(u,v,w,t,n)$, and the corresponding $q$-series identity is given by
\begin{equation}\label{series}
\sum_{u,v,w,t,n\in \N} B(u,v,w,t,n)a^ub^vc^wd^tq^n =(-aq;q)_\infty (-bq;q)_\infty(-cq;q)_\infty(-dq;q)_\infty\,\cdot
\end{equation}
\end{theo}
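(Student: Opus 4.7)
The plan is to construct an explicit weight- and color-preserving bijection $\Phi$ from $A$-partitions --- quadruples of partitions with distinct parts in the four primary colors $a,b,c,d$ --- to $B$-partitions satisfying the difference conditions of \eqref{tab1} and avoiding the forbidden patterns. Because the generating function of $A$-partitions factors immediately as $(-aq;q)_\infty(-bq;q)_\infty(-cq;q)_\infty(-dq;q)_\infty$, establishing $A(u,v,w,t,n)=B(u,v,w,t,n)$ is equivalent to exhibiting such a $\Phi$, and the $q$-series identity \eqref{series} follows for free.

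I would proceed iteratively, in the spirit of the Alladi--Andrews--Gordon bijection used for \Thm{aag}. Scanning the four distinct-part sequences in a fixed priority order, one inserts each primary part $k_x$ into the partial $B$-partition built so far. The insertion rule is local and has two modes: either $k_x$ becomes a new part of primary color $x$, or it merges with an existing primary-colored neighbor $k'_y$ of compatible color to produce a secondary part of color $xy$ and size $k+k'$. Color counts are preserved because a secondary part $xy$ is counted once in each of the two primary tallies, and the total size is preserved by construction. When inserting a new part would violate a minimal difference of \eqref{tab1}, a uniform upward ``push'' by $1$ or $2$ is applied to the parts above the insertion site and the procedure is retried; termination of this propagation must be verified.

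The two underlined entries in \eqref{tab1} and the three forbidden patterns encode the non-injectivity that the insertion must resolve. The entry $0$ at position $(cd,ab)$ permits the coexistence of $(k+2)_{cd}$ and $(k+2)_{ab}$, but certain triples topped by such a pair would, under a naive inverse, correspond to two distinct quadruples of $A$-parts; excluding the patterns $((k+2)_{cd},(k+2)_{ab},k_c)$ and $((k+2)_{cd},(k+2)_{ab},k_d)$ selects a canonical preimage. Analogously, the entry $1$ at $(ad,bc)$ produces the ambiguity eliminated by forbidding $((k+2)_{ad},(k+1)_{bc},k_a)$, with the exception at $k=1$ reflecting that one of the two candidate preimages would require a part of size $0$, so $(3_{ad},2_{bc},1_a)$ must in fact be permitted. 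The inverse map $\Phi^{-1}$ is then the greedy reduction that, whenever it meets a secondary part, splits it back into its two primary components in a manner dictated by the local data; well-definedness of $\Phi^{-1}$ relies precisely on the exclusion of the forbidden patterns.

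The main obstacle will be the combinatorial case analysis. The $10 \times 10$ difference table, combined with the various possible secondary neighbors an incoming primary part may meet, produces a long list of local configurations, each needing a consistent insertion and reduction rule. Showing that the propagation terminates, that the output avoids all forbidden patterns, and that every $B$-partition has exactly one preimage --- especially near the exceptional triple $(3_{ad},2_{bc},1_a)$ and near the smallest parts where boundary effects intervene --- will demand careful bookkeeping. Once this is in place, both the identity $A(u,v,w,t,n)=B(u,v,w,t,n)$ and the generating function identity \eqref{series} follow at once.
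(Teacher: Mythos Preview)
Your high-level strategy matches the paper's: exhibit a size- and primary-color-multiset-preserving bijection between $\Od$ (quadruples of distinct primary parts, equivalently a single $\succ$-ordered sequence in $\Pp$) and $\Ee$, from which $A(u,v,w,t,n)=B(u,v,w,t,n)$ and \eqref{series} follow immediately. Your heuristic for why the three forbidden triples arise, and why $(3_{ad},2_{bc},1_a)$ must be exempted, is also morally the right one.

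Where you diverge is in the algorithm itself. You propose an Alladi--Andrews--Gordon/Zhao-style \emph{insertion}: process primary parts one at a time in a fixed color priority, each incoming part either landing as a new primary part or merging with a primary neighbour, with ad hoc ``pushes by $1$ or $2$'' to repair local violations. The paper instead adapts \emph{Bressoud's} algorithm: begin with the entire $\succ$-ordered primary sequence, repeatedly locate the leftmost troublesome pair $\la_i\succ\la_{i+1}$ with $\la_i\not\gg\la_{i+1}$, fuse it into a single secondary part, and then let that secondary part bubble leftwards via the uniform crossing rule $(\la_j,\ \text{secondary})\mapsto(\text{secondary}+1,\ \la_j-1)$ until it is $\gg$-dominated. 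The inverse $\Psi$ undoes this by taking the rightmost secondary part, crossing it rightwards whenever its lower half $\beta$ fails to dominate the next primary part, and finally splitting it as $(\alpha,\beta)$. The advantages of the paper's route are that no color priority is needed, the crossing rule is a single $\pm1$ swap rather than a case-dependent push, and the invariants can be packaged cleanly (\Lem{lem1}--\Lem{lem3}, \Prp{pr1}--\Prp{pr2}); in particular the proof that forbidden patterns never appear reduces to a parity check on the lower half of the second secondary part in a consecutive pair, and \Lem{lem3} shows this is \emph{exactly} the condition singling out the three forbidden triples with the lone exception at $(3_{ad},2_{bc},1_a)$.

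Your proposal, as written, is a plan rather than a proof: the ``long list of local configurations'' you anticipate is genuinely the content of the argument, and you have not specified the insertion/push rules precisely enough to verify that the image lands in $\Ee$ or that the greedy splitting inverse is well-defined. In particular, your merge rule only contemplates a primary neighbour, but after several insertions the neighbour may already be secondary; and ``push by $1$ or $2$'' is too coarse to see why exactly the patterns $(cd,ab)$ and $(ad,bc)$ behave differently from the other $34$ ordered secondary pairs. If you want to pursue the insertion route you will need rules at least as sharp as the paper's crossing rule, and an analogue of \Lem{lem2} (the identity $\Delta(p,q)=\min\{k-l:\beta(k_p)\succ\alpha(l_q)\}$) to control when two secondary parts can sit adjacently.
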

By
specializing the variables in \Thm{th1}, one can deduce
many partition identities.   For example, by considering the following transformation in \eqref{series}
\begin{equation}\label{dila}
\left\lbrace 
\begin{array}{l r c l }
\text{dilation :} &q &\mapsto&q^{12}\\
\text{translations :} &a,b,c,d &\mapsto&q^{-8},q^{-4},q^{-2},q^{-1}\\
\end{array}
\right. \,,
\end{equation}
we obtain a corollary of \Thm{th1}.
\begin{cor}
For any positive integer $n$, the number of partitions of $n$ into distinct parts congruent to $-2^3,-2^2,-2^1,-2^0\mod 12$ is equal to the number of partitions of $n$ into parts not congruent to $1,5\mod 12$ and different from $2,3,6,7,9$, such that the difference between two consecutive parts is greater than $12$ up to the following exceptions:
\begin{itemize}
\item $\la_i-\la_{i+1}= 9\Longrightarrow \la_i\equiv \pm 3 \mod 12$ and $\la_i-\la_{i+2}\geq 24$,
\item $\la_i-\la_{i+1}= 12\Longrightarrow\la_i\equiv -2^3,-2^2,-2^1,-2^0\mod 12$,
\end{itemize}
except that the pattern $(27,18,4)$ is allowed. 
\end{cor}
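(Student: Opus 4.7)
The plan is to apply the dilation and translations of \eqref{dila} directly to the identity \eqref{series} in \Thm{th1} and to interpret each side as the generating function of one of the two classes described in the corollary. Set $t_a=8,\, t_b=4,\, t_c=2,\, t_d=1$ and $t_{XY}=t_X+t_Y$ for each secondary color $XY$, so that the substitution sends a colored part $k_X$ to the positive integer $12k-t_X$. The convenient first observation is that the values $(t_{ab},t_{ac},t_{ad},t_a,t_{bc},t_{bd},t_b,t_{cd},t_c,t_d)=(12,10,9,8,6,5,4,3,2,1)$ are strictly decreasing, so the colored ordering \eqref{cons} is faithfully transported to the standard ordering on $\N$.

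Next I would unpack the two sides. On the right, the product $(-aq;q)_\infty(-bq;q)_\infty(-cq;q)_\infty(-dq;q)_\infty$ becomes $(-q^4;q^{12})_\infty(-q^8;q^{12})_\infty(-q^{10};q^{12})_\infty(-q^{11};q^{12})_\infty$, i.e.\ the generating function for partitions into distinct parts $\equiv -2^0,-2^1,-2^2,-2^3\pmod{12}$. On the left, primary colors contribute residues $4,8,10,11$ and secondary colors contribute $0,2,3,6,7,9$ modulo $12$; together these exhaust all residues except $1$ and $5$, and the size-$\geq 2$ restriction on secondary parts removes exactly the integers $\{2,3,6,7,9\}$ (images of $1_{ab},1_{ac},1_{ad},1_{bc},1_{bd},1_{cd}$).

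The key step is then the case check for difference conditions. A colored constraint $\la_i-\la_{i+1}\geq\delta(X,Y)$ translates to $\la_i-\la_{i+1}\geq 12\,\delta(X,Y)+t_Y-t_X$ in integers. Running this over the sixty entries of \eqref{tab1}, I expect the bound to be at least $13$ in all but three families: the four diagonal primary pairs $(X,X)$ with $X\in\{a,b,c,d\}$ and $\delta=1$, giving exactly $12$ with $\la_i\equiv -t_X\in\{-2^3,-2^2,-2^1,-2^0\}\pmod{12}$; the pair $(ad,bc)$ with $\delta=1$, giving $9$ and $\la_i\equiv 3\pmod{12}$; and the underlined pair $(cd,ab)$ with $\delta=0$, again giving $9$ and $\la_i\equiv -3\pmod{12}$. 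These three families match the two bulleted exceptions in the corollary exactly.

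Finally I would translate the three forbidden patterns: they become the integer triples $(12k+21,12k+12,12k-2)$, $(12k+21,12k+12,12k-1)$, and $(12k+15,12k+6,12k-8)$, each with $\la_i-\la_{i+1}=9$ and $\la_i-\la_{i+2}\in\{22,23\}<24$, while the allowed exception $(3_{ad},2_{bc},1_a)$ becomes $(27,18,4)$. The step I expect to be the main obstacle is verifying the \emph{converse}: that the only admissible colored triples whose integer images satisfy $\la_i-\la_{i+1}=9$ and $\la_i-\la_{i+2}<24$ are precisely these three families. This requires examining, after $\la_{i+1}=(k+2)_{ab}$ (arising from $cd\to ab$), which entries of row $ab$ in \eqref{tab1} allow an integer gap below $15$ (only $k_c$ and $k_d$), and similarly checking after $\la_{i+1}=(k+1)_{bc}$ (arising from $ad\to bc$) that only $k_a$ qualifies. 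Once this exhaustive check is completed, banning the three patterns with the single exception is equivalent to the clause ``$\la_i-\la_{i+1}=9\Longrightarrow\la_i-\la_{i+2}\geq 24$'', and the corollary drops out of \Thm{th1}.
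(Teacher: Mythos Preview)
Your proposal is correct and follows exactly the approach the paper indicates: the paper simply asserts that applying the dilation $q\mapsto q^{12}$ and translations $a,b,c,d\mapsto q^{-8},q^{-4},q^{-2},q^{-1}$ of \eqref{dila} to \Thm{th1} yields the corollary, without spelling out the verification. Your case analysis (checking that the minimal integer differences are $\geq 13$ except for the six pairs giving $9$ or $12$, and that the forbidden patterns translate precisely to the clause $\la_i-\la_{i+2}\geq 24$ when $\la_i-\la_{i+1}=9$) is exactly the routine computation needed to justify that assertion.
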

For example, with $n=49$, the partitions of the first kind are 
\[(35,10,4),(34,11,4),(28,11,10),(23,22,4),\]
\[(23,16,10),(22,16,11)\,\,\text{and}\,\, (16,11,10,8,4)\]
and the partitions of the second kind are
\[(35,14),(34,15),(33,16),(45,4),(39,10),(38,11)\,\,\text{and}\,\,(27,18,4)\,\cdot\]
\textbf{Corollary 1.1} may be compared
with \textbf{Theorem 3} of \cite{AAB03}, which is \Thm{th2} transformed by \eqref{dila} but
with the dilation $q \mapsto q^{15}$ instead of $q \mapsto q^{12}$.
\\\\The paper is organized as follows. In \Sct{sct2}, we will present some tools that will be useful for the proof of \Thm{th1}. After that, in \Sct{sct3}, we will give the bijection for \Thm{th1}. Then, in \Sct{sct4}, we will prove its well-definedness. Finally, in \Sct{sct5}, we will present and prove the bijection between the partitions with forbidden patterns considered in \Thm{th1} and the partitions with quaternary parts given in \Thm{th2}. In Part II of this series, we will show how our method can be
used to go beyond G\"ollnitz' theorem to any number of primary colors.
\section{Preliminaries}\label{sct2}
\subsection{The setup}
Denote by $\C=\{a,b,c,d\}$ the set of primary colors  and $\Cc=\{ab,ac,ad,bc,bd,cd\}$ the set of secondary colors, and recall the order on $\C\sqcup\Cc$: 
\begin{equation}\label{orD}
ab<ac<ad<a<bc<bd<b<cd<c<d\,\cdot
\end{equation}
We can then define the strict lexicographic order $\succ$ on colored parts by
\begin{equation}\label{lex}
k_p\succ l_{q} \Longleftrightarrow k-l\geq \chi(p\leq q)\,\cdot
\end{equation}
Explicitly, this gives the order 
\begin{equation}\label{cons1}
1_{ab}\prec 1_{ac}\prec 1_{ad} \prec 1_{a}\prec 1_{bc} \prec 1_{bd}\prec 1_b\prec 1_{cd}\prec 1_c\prec 1_d\prec 2_{ab}\prec\cdots\,,
\end{equation}
previously etablished in \eqref{cons}.
We denote by $\Pp$ the set of positive integers with primary color.
\\\\We can easily see that for any $pq\in \Cc$, \textit{with} $p<q$, and any $k\geq 1$, we have that 
\begin{align}
\label{half1}(2k)_{pq} &= k_{q}+k_{p}\\
\label{half2}(2k+1)_{pq} &= (k+1)_{p}+k_{q}\,\cdot
\end{align}
In fact,  any part greater that $1$ with a secondary color $pq$ can be uniquely written as the sum of two consecutive parts in $\Pp$ with colors $p$ and $q$. We then denote by $\Sc$ the set of secondary parts greater than $1$, and define the functions $\alpha$ and $\beta$ on $\Sc$ by 
\begin{equation}\label{ab}
\alpha: \left\lbrace \begin{array}{l c l}
2k_{pq}&\mapsto& k_{q}\\
(2k+1)_{pq}&\mapsto&(k+1)_{p}
\end{array}\right.\qquad \text{and}\qquad\beta: \left\lbrace \begin{array}{l c l}
2k_{pq}&\mapsto& k_{p}\\
(2k+1)_{pq}&\mapsto&k_{q}
\end{array}\right.\,,
\end{equation}
respectively named upper and lower halves. One can check that for any $k_{pq}\in \Sc$, 
\begin{equation}\label{abba}
\alpha((k+1)_{pq}) = \beta(k_{pq})+1\quad\text{and}\quad \beta((k+1)_{pq})=\alpha(k_{pq})\,\cdot
\end{equation}
In the previous sum, adding an integer to a part does not change its color.
We can then deduce by induction that for any $m\geq 0$,
\begin{equation}\label{aj}
\alpha((k+m)_{pq})\preceq \alpha(k_{pq})+m \quad\text{and}\quad \beta((k+m)_{pq})\preceq \beta(k_{pq})+m\,\cdot
\end{equation}
\[\]
Recall the table \eqref{tab2}
\[
\begin{array}{|c|cccc|ccc|cc|c|}
\hline
_{\lambda_i}\setminus^{\lambda_{i+1}}&ab&ac&ad&a&bc&bd&b&cd&c&d\\
\hline
ab&2&2&2&2&2&2&2&2&2&2\\
ac&1&2&2&2&2&2&2&2&2&2\\
ad&1&1&2&2&\textcolor{blue}{2}&2&2&2&2&2\\
a&1&1&1&1&2&2&2&2&2&2\\
\hline
bc&\textcolor{blue}{1}&1&1&1&2&2&2&2&2&2\\
bd&1&1&1&1&1&2&2&2&2&2\\
b&1&1&1&1&1&1&1&2&2&2\\
\hline
cd&1&1&1&1&1&1&1&2&2&2\\
c&1&1&1&1&1&1&1&1&1&2\\
\hline
d&1&1&1&1&1&1&1&1&1&1\\
\hline
\end{array}\,\cdot
\]
It can be viewed as an order $\triangleright$ on $\Pp\sqcup \Sc$ defined by
\begin{equation}\label{Ordre}
k_p\triangleright l_{q}\Longleftrightarrow k-l\geq 1+\left\lbrace
\begin{array}{ll}
\chi(p<q)&\text{if}\quad p\,\,\text{or}\,\,q\in \C\\
\chi(p\leq q)&\text{if}\quad p\,\,\text{and}\,\,q\in \Cc
\end{array}\right.\,\cdot
\end{equation}
By considering the lexicographic order $\succ$, \eqref{Ordre} becomes 
\begin{equation}\label{Ord}
k_p\triangleright l_{q}\Longleftrightarrow\left\lbrace
\begin{array}{ll}
k_p\succeq (l+1)_{q}&\text{if}\quad p\,\,\text{or}\,\,q\in \C\\
k_p\succ (l+1)_{q}&\text{if}\quad p\,\,\text{and}\,\,q\in \Cc
\end{array}\right.\,\cdot
\end{equation}
We can observe that for any primary colors $p,q$
\begin{equation}\label{nog}
k_p\succ l_{q}\quad\text{and}\quad k_p\not\triangleright \,\,l_{q}\quad\Longleftrightarrow \quad k-l=\chi(p<q)\quad\text{and}\quad p\neq q\,,
\end{equation}
and we easily check that in this case, $(k_p,l_q) = (\alpha(k_p+l_{q}),\beta(k_p+l_{q}))$, for $k_p+l_{q}$ viewed as an element of $\Sc$ (see \eqref{half1}, \eqref{half2}).
\\\\We recall that the tables \eqref{tab1} 
\[
 \Delta=\begin{array}{|c|cccc|ccc|cc|c|}
\hline
_{\lambda_i}\setminus^{\lambda_{i+1}}&ab&ac&ad&a&bc&bd&b&cd&c&d\\
\hline
ab&2&2&2&2&2&2&2&2&2&2\\
ac&1&2&2&2&2&2&2&2&2&2\\
ad&1&1&2&2&\textcolor{red}{1}&2&2&2&2&2\\
a&1&1&1&1&2&2&2&2&2&2\\
\hline
bc&1&1&1&1&2&2&2&2&2&2\\
bd&1&1&1&1&1&2&2&2&2&2\\
b&1&1&1&1&1&1&1&2&2&2\\
\hline
cd&\textcolor{red}{0}&1&1&1&1&1&1&2&2&2\\
c&1&1&1&1&1&1&1&1&1&2\\
\hline
d&1&1&1&1&1&1&1&1&1&1\\
\hline
\end{array}\,
\]
and \eqref{tab2} differ only when we have a pair $(p,q)$ of secondary colors such that $(p,q)\in \{(cd,ab),(ad,bc)\}$.  In these cases, the difference in \eqref{tab1} is one less.
\\\\We will now define a relation  $\gg$ on $\Pp\sqcup \Sc$ in such a way that, 
\begin{equation}
k_p\gg l_q \Longleftrightarrow k-l\geq \Delta(p,q)\,\cdot
\end{equation}
Using \eqref{Ord}, this relation can be summarized by the following equivalence :
\begin{equation}\label{Ordd}
k_p\gg l_{q}\Longleftrightarrow\left\lbrace
\begin{array}{ll}
k_p\succeq (l+1)_{q}&\text{if}\quad p\,\,\text{or}\,\,q \in \C\\
k_p\succ (l+1)_{q}&\text{if}\quad p\,\,\text{and}\,\,q \in \Cc\quad\text{and}\quad (p,q)\notin\{(cd,ab),(ad,bc)\}\\
k_p\succ l_{q}&\text{if}\quad (p,q)\in\{(cd,ab),(ad,bc)\}
\end{array}\right.\,\cdot
\end{equation}
We denote by $\Od$ the set of partitions with parts in $\Pp$ and well-ordered by $\succ$. We then have that 
$\la \in \Od$ if and only if there exist $\la_1\succ\cdots\succ \la_t \in \Pp$ such that $\la=(\la_1,\ldots,\la_t)$. We set 
$c(\la_i)$ to be the color of $\la_i$ in $\C$, and $C(\la) = c(\la_1)\cdots c(\la_t)$ as a commutative product of colors in $\C$.
We denote by $\E$ the set of partitions with parts in $\Pp\sqcup \Sc$ and well-ordered by $\gg$. We then have that 
$\nu \in \E$ if and only if there exist $\nu_1\gg\cdots\gg\nu_t \in \Pp\sqcup \Sc$ such that $\nu=(\nu_1,\ldots,\nu_t)$. We set colors $c(\nu_i)\in \C\sqcup\Cc$ depending on whether $\nu_i$ is in  $\Pp$ or $\Sc$, and we also define 
$C(\nu)=c(\nu_1)\cdots c(\nu_t)$ \textit{seen} as a commutative product of colors in $\C$. In fact, a secondary color is just a product of two primary colors. For both kinds of partitions, their size is the sum of their part sizes. 
\\\\We also denote by $\Ee$ the subset of partitions of $\E$ without the forbidden patterns,
\begin{equation}\label{forb}
((k+2)_{cd},(k+2)_{ab},k_c),((k+2)_{cd},(k+2)_{ab},k_d), ((k+2)_{ad},(k+1)_{bc},k_{a})\,,
\end{equation}
except the pattern $(3_{ad},2_{bc},1_a)$ which is allowed. 
We finally define $\Eee$ as the subset of partitions of $\E$ with parts well-ordered by $\triangleright$ in \eqref{Ord}, and we observe that $\Eee$ is indeed a subset of $\Ee$.
\subsection{Technical lemmas}
We will state  and prove some important lemmas for the proof of \Thm{th1}.
\begin{lem}[\textbf{Ordering primary and secondary parts}]\label{lem1}
For any $(l_p,k_{q})\in\Pp\times \Sc$, we have the following equivalences:
\begin{align}
&\quad\l_p\not \gg k_{q}\Longleftrightarrow (k+1)_{q}\gg (l-1)_p\label{oe}\,,\\
&\quad l_p \gg \alpha(k_{q})\Longleftrightarrow \beta((k+1)_{q})\not \succ (l-1)_p\label{eo}\,\cdot
\end{align}
\end{lem}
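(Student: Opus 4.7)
The plan is to unwind both equivalences directly from the definitions of $\gg$ in \eqref{Ordd} and of $\succ$ in \eqref{lex}, using the identity $\beta((k+1)_q) = \alpha(k_q)$ from \eqref{abba}. Since $l_p \in \Pp$ is primary while $k_q \in \Sc$ is secondary, every comparison that appears (involving $l_p$ together with $k_q$, $\alpha(k_q)$, or $\beta((k+1)_q)$) falls into the first case of \eqref{Ordd}, so $\gg$ simply reduces to $\succeq$ shifted by one.

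For \eqref{oe}, I would first rewrite $l_p \not\gg k_q$ as $l_p \not\succeq (k+1)_q$, which by totality of the lexicographic order is the same as $(k+1)_q \succ l_p$. In parallel, $(k+1)_q \gg (l-1)_p$ unfolds to $(k+1)_q \succeq l_p$. Since $(k+1)_q$ carries a secondary color and $l_p$ a primary one, equality between them is impossible, so the two conditions $(k+1)_q \succ l_p$ and $(k+1)_q \succeq l_p$ coincide, and \eqref{oe} follows.

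For \eqref{eo}, I would first apply \eqref{abba} to replace $\beta((k+1)_q)$ by $\alpha(k_q)$, so that the right-hand side becomes $\alpha(k_q) \not\succ (l-1)_p$, which is now a comparison of two primary parts. Writing $\alpha(k_q) = m_r$ with $r \in \C$, the left-hand side $l_p \succeq (m+1)_r$ and the right-hand side $m_r \not\succ (l-1)_p$ each unfold through \eqref{lex} into a linear inequality in $l-m$. A short three-way split on whether $p<r$, $p=r$, or $p>r$ shows both to be equivalent to the single condition $l - m \geq 1 + \chi(p < r)$, using the identity $\chi(p < r) + \chi(r \leq p) = 1$ to reconcile the two forms. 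The main obstacle is only the bookkeeping around $\succ$ versus $\succeq$ and the observation that equality between two colored parts can occur only when $p = r$, which is the one place where the cases genuinely differ; beyond that, the argument is a mechanical translation of each relation into an inequality.
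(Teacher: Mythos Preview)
Your proof is correct and follows the same approach as the paper: unwind \eqref{Ordd} to reduce $\gg$ to $\succeq$ shifted by one, and use \eqref{abba} to identify $\beta((k+1)_q)$ with $\alpha(k_q)$. For \eqref{eo} the paper is slightly quicker than your three-way split: once both sides are rewritten in terms of $\alpha(k_q)$, the left becomes $(l-1)_p \succeq \alpha(k_q)$ and the right becomes $\alpha(k_q)\not\succ (l-1)_p$, and these are the same statement by totality of $\succ$, with no need to pass through \eqref{lex}.
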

\begin{lem}[\textbf{Ordering secondary parts}]\label{lem2}
Let us consider the table $\Delta$ in \eqref{tab1}. Then, for any secondary colors $p,q\in \Cc$, 
\begin{equation}\label{gam}
\Delta(p,q) = \min\{k-l:\beta(k_{p})\succ \alpha(l_{q})\}\,\cdot
\end{equation} 
Moreover, if the secondary parts $k_p,l_q$ are such that $\beta(k_p)\succ\beta(\l_q)$, then 
\begin{equation}\label{sw1}
 (k+1)_p\gg l_q\,\cdot
\end{equation}
Furthermore, if $k_{p}\gg l_{q}$, we then have either $\beta(k_{p})\succ \alpha(l_{q})$ or 
\begin{equation}\label{sw}
\alpha(l_{q})+1\gg \alpha((k-1)_{p} )\succ \beta((k-1)_{p})\succ \beta(l_q)\,\cdot
\end{equation}
\end{lem}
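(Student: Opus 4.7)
The strategy is to reduce each of the three assertions to a case analysis on the ordered pair $(p,q)$ of secondary colors together with the parities of $k$ and $l$, leveraging the explicit decompositions \eqref{half1}--\eqref{half2} and the shifting identities \eqref{abba}.

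For \eqref{gam}, I fix $(p,q)$, write $p=p_1p_2$ and $q=q_1q_2$ with $p_1<p_2$ and $q_1<q_2$, and observe that $\beta(k_p)$ and $\alpha(l_q)$ each take one of two explicit primary-colored values depending on parity. For each of the four parity combinations I would read off from \eqref{lex} the minimum $k-l$ for which $\beta(k_p)\succ\alpha(l_q)$, then take the minimum over parities and check that it matches $\Delta(p,q)$ in \eqref{tab1}. The exceptional entries $\Delta(cd,ab)=0$ and $\Delta(ad,bc)=1$ arise precisely because, for a favorable parity alignment, the lower half of the first secondary part already dominates the upper half of the second in lex order.

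For \eqref{sw1}, I would convert the hypothesis $\beta(k_p)\succ\beta(l_q)$ into a lower bound on $k-l$ via \eqref{lex} in each parity case, and verify that this bound always satisfies $k-l\geq \Delta(p,q)-1$, giving $(k+1)_p\gg l_q$ via \eqref{gam}. For \eqref{sw}, assume the first alternative fails, i.e.\ $\beta(k_p)\not\succ\alpha(l_q)$. Using $\alpha((k-1)_p)=\beta(k_p)$ from \eqref{abba}, the leftmost inequality $\alpha(l_q)+1\gg\alpha((k-1)_p)$ reduces to $\alpha(l_q)\succeq\beta(k_p)$, which is exactly the negation of the first alternative and hence automatic. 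The middle inequality $\alpha((k-1)_p)\succ\beta((k-1)_p)$ holds because $\alpha\succ\beta$ for any secondary part. For the rightmost inequality $\beta((k-1)_p)\succ\beta(l_q)$, rewrite $\beta((k-1)_p)=\alpha(k_p)-1$ via \eqref{abba}, and note that $k-l\geq 3$ would already force $\beta(k_p)\succ\alpha(l_q)$, contradicting our assumption; so $k-l\in\{\Delta(p,q),\ldots,2\}$, and the rightmost inequality reduces to a short case check on parities and colors.

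The main obstacle I anticipate is the bookkeeping for the exceptional pairs $(cd,ab)$ and $(ad,bc)$, where $\Delta$ is strictly smaller than in the generic ordering \eqref{Ord}. In these cases the chain in \eqref{sw} becomes tight, and the rightmost inequality must be verified across several parity-color combinations. It is no accident that these are precisely the pairs producing the forbidden patterns \eqref{forb}: the technical content of the lemma is concentrated exactly where the combinatorial theorem requires the most care.
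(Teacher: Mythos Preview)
Your strategy is sound and would succeed, and for \eqref{gam} it coincides with the paper's proof (a direct check of the $36$ pairs, illustrated on $(cd,ab)$ and $(ad,bc)$). But for \eqref{sw1} and the last inequality in \eqref{sw} you propose further case-work where the paper avoids it.

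For \eqref{sw1} the paper gives a one-line deduction from \eqref{gam}: by \eqref{abba} one has $\beta(l_q)=\alpha((l-1)_q)$, so the hypothesis reads $\beta(k_p)\succ\alpha((l-1)_q)$, and \eqref{gam} then forces $k-(l-1)\geq\Delta(p,q)$, i.e.\ $(k+1)_p\gg l_q$. No parity split is needed.

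For the rightmost inequality in \eqref{sw}, the paper records, as a byproduct of the $36$-case verification of \eqref{gam}, that for each $(p,q)$ the parity-dependent minimum of $k-l$ is either $\Delta(p,q)$ or $\Delta(p,q)+1$ (in the paper's phrasing, ``this minimum value was reached at $k$ or $k-1$''). Thus if $k-l\geq\Delta(p,q)$ but $\beta(k_p)\not\succ\alpha(l_q)$, the parity of $k$ must be the one with minimum $\Delta(p,q)+1$; shifting both $k$ and $l$ down by one flips to the other parity and gives $\beta((k-1)_p)\succ\alpha((l-1)_q)=\beta(l_q)$ directly, again via \eqref{abba}. Your reduction to $k-l\leq 2$ followed by a residual parity-color check is correct but unnecessary once this observation is in hand. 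Your treatment of the leftmost and middle inequalities in \eqref{sw} matches the paper's.
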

\begin{lem}[\textbf{Reversibility $\Od\leftarrow \Ee$}]\label{lem3}
Let us consider a partition $\nu =(\nu_1,\ldots,\nu_t)\in \E$. Then, for any $i\in [1,t-2]$ such that 
$(\nu_{i+1},\nu_{i+2})\in \Sc\times\Pp$ and $(c(\nu_i),c(\nu_{i+1}))\notin \{(ad,bc),(cd,ab)\}$, we have 
\begin{equation}\label{ee}
\nu_{i}\succ \nu_{i+2}+2\,\cdot
\end{equation}
Furthermore, the following are equivalent:
\begin{enumerate}
\item $\nu\in \E_1$,
\item For any $i\in [1,t-2]$ such that $(\nu_i,\nu_{i+1})$ is a pattern in $\{((k+1)_{ad},k_{bc}),(k_{cd},k_{ab})\}$ different from $(3_{ad},2_{bc})$, we have that 
\begin{equation}\label{ee2}
\nu_{i}\succeq \nu_{i+2}+2\,\cdot
\end{equation}
\end{enumerate}
\end{lem}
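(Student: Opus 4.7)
My plan for the first assertion is a direct chaining argument using \eqref{Ordd}. Because $\nu_{i+1} \gg \nu_{i+2}$ with $\nu_{i+1} \in \Sc$ and $\nu_{i+2} \in \Pp$, the first clause of \eqref{Ordd} (the primary $\nu_{i+2}$ forces us into the "or $q \in \C$" case) gives $\nu_{i+1} \succeq \nu_{i+2} + 1$. Since a secondary part and a primary part cannot be equal as colored parts, this is actually the strict inequality $\nu_{i+1} \succ \nu_{i+2} + 1$, and adding $1$ to both sides yields $\nu_{i+1} + 1 \succ \nu_{i+2} + 2$. For the link $\nu_i \succ \nu_{i+1} + 1$ I split on the type of $\nu_i$. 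If $\nu_i \in \Pp$, \eqref{Ordd} gives $\nu_i \succeq \nu_{i+1} + 1$, strict again by the same different-color-types observation. If $\nu_i \in \Sc$, the hypothesis $(c(\nu_i), c(\nu_{i+1})) \notin \{(cd, ab), (ad, bc)\}$ puts us in the second case of \eqref{Ordd}, which gives $\nu_i \succ \nu_{i+1} + 1$ directly. Concatenating yields $\nu_i \succ \nu_{i+1} + 1 \succ \nu_{i+2} + 2$, hence $\nu_i \succ \nu_{i+2} + 2$.

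For the equivalence in the second assertion, I proceed pair-by-pair on the two excluded tight patterns. For each such pair $(\nu_i, \nu_{i+1})$, I use \eqref{Ordd} and \eqref{lex} to translate $\nu_i \succeq \nu_{i+2} + 2$ into an explicit numerical inequality on $\nu_i - \nu_{i+2}$ depending on $c(\nu_{i+2})$, and compare with the minimum difference forced by $\nu_{i+1} \gg \nu_{i+2}$ in row $c(\nu_{i+1})$ of $\Delta$ in \eqref{tab1}. When $\nu_{i+2} \in \Sc$, a direct check using these table entries shows that $\nu_i \succeq \nu_{i+2} + 2$ is automatic, so only triples with primary $\nu_{i+2}$ are restricted. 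For the tight pair $(k_{cd}, k_{ab})$, the condition fails exactly when $c(\nu_{i+2}) \in \{c, d\}$ and $\nu_i - \nu_{i+2} = 2$, matching the first two forbidden patterns of \eqref{forb}. For the tight pair $((K+1)_{ad}, K_{bc})$ with $K \geq 3$ (i.e.\ different from the excluded exception $(3_{ad}, 2_{bc})$), the condition fails exactly when $\nu_{i+2} = (K-1)_a$, matching the third forbidden pattern of \eqref{forb}; the allowed triple $(3_{ad}, 2_{bc}, 1_a)$ corresponds to the excluded pair $K = 2$, which is precisely why that exception is carved out. Reading these equivalences in both directions gives $(1) \Longleftrightarrow (2)$.

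The main obstacle is careful bookkeeping: correctly distinguishing $\succ$ from $\succeq$ between parts of different color types, using the order \eqref{orD} to evaluate the various $\chi(p \leq q)$ values, and tracking the interplay between the two exceptional secondary-secondary pairs $(cd, ab)$, $(ad, bc)$ and the single allowed triple $(3_{ad}, 2_{bc}, 1_a)$. Everything else is a bounded case analysis over a finite set of color configurations.
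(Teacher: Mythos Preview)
Your argument is correct and follows essentially the same route as the paper's proof. For the first assertion you chain $\nu_i \succ \nu_{i+1}+1 \succ \nu_{i+2}+2$ via \eqref{Ordd}, and for the equivalence you translate $\nu_i \succeq \nu_{i+2}+2$ through the lexicographic order \eqref{cons1} to identify exactly the forbidden triples of \eqref{forb}; this is precisely what the paper does. Your treatment is in fact slightly more explicit than the paper's in two places: you spell out the $\succeq$-to-$\succ$ upgrade when the two parts have different color types (primary versus secondary), and you separately verify that secondary $\nu_{i+2}$ never obstructs condition~(2), a point the paper leaves implicit.
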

\begin{proof}[Proof of \Lem{lem1}]
To prove \eqref{oe}, we observe that, for any $(l_p,k_{q})\in\Pp\times \Sc$,  by \eqref{Ordd}, 
\[l_p\not \gg k_{q} \Longleftrightarrow  l_p\not\succeq (k+1)_{q}\,,\]
and
\begin{align*}
(k+1)_{q}\gg (l-1)_p&\Longleftrightarrow (k+1)_{q}\succ l_p\\
 &\Longleftrightarrow (k+1)_{q}\not\preceq l_p\,\cdot
\end{align*}
To prove \eqref{eo}, we first remark that, by \eqref{abba},  $\alpha(k_{q})=\beta((k+1)_{q})$. We then obtain by \eqref{Ordd} that
\[l_p \gg \alpha(k_{q}) \Longleftrightarrow (l-1)_p \succeq \alpha(k_{q})\]
and  
\begin{align*}
\beta((k+1)_{q})\not\succ(l-1)_p&\Longleftrightarrow \alpha(k_{q})\not\succ(l-1)_p\\
&\Longleftrightarrow \alpha(k_{q})\preceq (l-1)_p\,\cdot
\end{align*}
\end{proof}
\begin{proof}[Proof of \Lem{lem2}]
Let us consider $\min\{k-l: \beta(k_{p})\succ \alpha(l_{q})\}$. We just check for the $36$ pairs $(p,q)$ in $\Cc^2$. As an example, we take the  pairs $(cd,ab),(ad,bc)$.
\begin{itemize}
\item For $k=2k'+1$,  we have 
$(\alpha(k_{cd}),\beta(k_{cd}))=((k'+1)_c,k'_d)$. Then, to minimize $k-l$, $\alpha(l_{ab})$ and $\beta(l_{ab})$ have to be the greatest primary parts with color $a,b$ less than $k'_d$. So we obtain $k'_b$ and $k'_a$. We then have 
\[
k-l= 2k'+1-2k'=1\,\cdot
\]
For $k=2k'$,  we have 
$(\alpha(k_{cd}),\beta(k_{cd}))=(k'_d,k'_c)$. Then to minimize $k-l$, $\alpha(l_{ab})$ and $\beta(l_{ab})$ have to be the greatest primary parts with color $a,b$ less than $k'_c$. So we obtain $k'_b$ and $k'_a$. We then have 
\[
k-l= 2k'-2k'=0\,\cdot
\]
Therefore, $\Delta(cd,ab)=0$.
\item We check with the same reasoning by taking for $(ad,bc)$ consecutive parts
\[(k+1)_a\succ k_d\succ k_c\succ k_b\]
and \[(k+1)_d\succ(k+1)_a\succ k_c \succ k_b\,,\]
and we obtain $\Delta(ad,bc)=1$.\\
\end{itemize}
To prove \eqref{sw1}, 
we have by \eqref{abba} that $\alpha((l-1)_q)=\beta(l_q)$. Since $\beta(k_p)\succ \beta(l_q)=\alpha((l-1)_q)$, this then implies by \eqref{gam} that $k_p\gg (l-1)_q$, and this is equivalent to $(k+1)_p\gg l_q$.
\\\\Let us now suppose that $k-l\geq \Delta(p,q)$. We just saw that this minimum value was reached at $k$ or $k-1$. Then if we do not have $\beta(k_{p})\succ \alpha(l_{q})$, we necessarily have $\beta((k-1)_{p})\succ \alpha((l-1)_{q})=\beta(l_{q})$ by \eqref{abba}. Moreover, by \eqref{Ordd}, we have
\[\beta(k_{p})\not\succ \alpha(l_{q})\Longleftrightarrow \alpha(l_{q})+1\gg \alpha((k-1)_{p} )\,,\]
so that we obtain \eqref{sw}.
\end{proof}
\begin{proof}[Proof of \Lem{lem3}]
For any $\nu =(\nu_1,\ldots,\nu_t)\in \E$ and any $i\in [1,t-2]$, we have $\nu_i\gg\nu_{i+1}\gg\nu_{i+2}$.
By \eqref{Ordd}, the fact that
$(\nu_{i+1},\nu_{i+2})\in \Sc\times\Pp$ implies that
\[\nu_{i+1}\succ \nu_{i+2}+1\]
and $(c(\nu_i),c(\nu_{i+1}))\notin \{(ad,bc),(cd,ab)\}$ implies that
\[\nu_i\succ \nu_{i+1}+1 \,,\]
and we thus have \eqref{ee}.
\\\\To prove the second part, we have to show that not having the forbidden patterns in \eqref{forb} is equivalent to the second condition.
\begin{itemize}
\item If we  suppose that $(\nu_i,\nu_{i+1})= (k_{cd},k_{ab})$, we then have by \eqref{Ordd} that
\begin{align*}
\nu_{i+1}\gg \nu_{i+2}&\Longleftrightarrow \nu_{i+1}\succ\nu_{i+2}+1\\
&\Longleftrightarrow k_{ab}\succ\nu_{i+2}+1\\
&\Longleftrightarrow (k-1)_{d}\succeq\nu_{i+2}+1 \quad\text{(by \eqref{cons1})}\,\cdot
\end{align*}
By \eqref{cons1}, we then have that the fact that the patterns $k_{cd},k_{ab},(k-2)_d$ and $k_{cd},k_{ab},(k-2)_c$ are forbidden for $k\geq 3$ is equivalent to $(k-1)_{cd}\succeq\nu_{i+2}+1$, which means that $k_{cd}\succeq \nu_{i+2}+2$.
\item If we suppose that $(\nu_i,\nu_{i+1})= ((k+1)_{ad},k_{bc})$ with $k\geq 3$, we then have by \eqref{Ordd} that
\begin{align*}
\nu_{i+1}\gg \nu_{i+2}&\Longleftrightarrow \nu_{i+1}\succ\nu_{i+2}+1\\
&\Longleftrightarrow k_{cb}\succ\nu_{i+2}+1\\
&\Longleftrightarrow k_{a}\succeq\nu_{i+2}+1\quad\text{(by \eqref{cons1})}\,\cdot
\end{align*}
We then have by \eqref{cons1} that the fact that the pattern $(k+1)_{ad},k_{bc},(k-1)_a$  is forbidden for $k\geq 3$ is equivalent to $k_{ad}\succeq\nu_{i+2}+1$, which means that $(k+1)_{ad}\succeq \nu_{i+2}+2$.
\end{itemize}
\end{proof}
\section{Bressoud's algorithm}\label{sct3}
Here we adapt the algorithm given by Bressoud in his bijective proof of Schur's partition theorem \cite{BR80}. The bijection is easy to
describe and execute, but its justification is more subtle and is
given in the next section.
\subsection{From $\Od$ to $\Ee$} Let us consider the following machine $\Phi$:
\begin{itemize}
\item[\Soo:]For a sequence $\la= \la_1,\ldots,\la_t$, take the smallest $i<t$ such that  $\la_i,\la_{i+1}\in \Pp$ and $\la_i\succ \la_{i+1}$ but $\la_i\not\gg \la_{i+1}$, if it exists, and replace
\begin{equation}
\begin{array}{l c l l}
\la_i &\leftarrowtail& \la_i+\la_{i+1}&\text{as a part in } \Sc\\
\la_j &\leftarrow& \la_{j+1}& \text{for all}\quad i<j<t\,\, 
\end{array}
\end{equation}
and move to \Stt. We call such a pair of parts a \textit{troublesome} pair. We observe that $\la$ loses two parts in $\Pp$ and gains one part in $\Sc$. 
The new sequence is $\la = \la_1,\ldots,\la_{t-1}$. Otherwise, exit from the machine.\\
\item[\Stt:]For $\la= \la_1,\ldots,\la_t$, take the smallest $i<t$ such that $(\la_i,\la_{i+1})\in \Pp\times\Sc$ and $\la_i\not\gg \la_{i+1}$ if it exists,  and replace 
\begin{equation}
(\la_i,\la_{i+1}) \looparrowright (\la_{i+1}+1,\la_i-1)\in \Sc\times\Pp
\end{equation}
and redo \Stt. We say that the parts $\la_i,\la_{i+1}$ are \textit{crossed}. Otherwise, move to \Soo.
\end{itemize} 
Let $\Phi(\la)$ be the resulting sequence after putting  any $\la=(\la_1,\ldots,\la_t)\in \Od$ in $\Phi$. 
This transformation preserves the size and the commutative product of primary colors of partitions.  Let us apply this machine on the partition $(11_c,8_d,6_a,4_d,4_c,4_b,3_a,2_b,2_a,1_d,1_c,1_b,1_a)$.
\[ \begin{array}{ccccccccccccccccc}
\begin{matrix}
11_c\\
8_d\\
6_a\\
\underline{4_d}\\
\underline{4_c}\\
4_b\\
3_a\\
2_b\\
2_a\\
1_d\\
1_c\\
1_b\\
1_a
\end{matrix} &\rightarrowtail&
\begin{matrix}
11_c\\
8_d\\
\mathbf{6_a}\\
\mathbf{8_{cd}}\\
4_b\\
3_a\\
2_b\\
2_a\\
1_d\\
1_c\\
1_b\\
1_a
\end{matrix}&
\looparrowright&
\begin{matrix}
11_c\\
\mathbf{8_d}\\
\mathbf{9_{cd}}\\
5_a\\
4_b\\
3_a\\
2_b\\
2_a\\
1_d\\
1_c\\
1_b\\
1_a
\end{matrix}
&
\looparrowright&
\begin{matrix}
11_c\\
10_{cd}\\
7_d\\
\underline{5_a}\\
\underline{4_b}\\
3_a\\
2_b\\
2_a\\
1_d\\
1_c\\
1_b\\
1_a
\end{matrix}
&
\rightarrowtail&
\begin{matrix}
11_c\\
10_{cd}\\
\mathbf{7_d}\\
\mathbf{9_{ab}}\\
3_a\\
2_b\\
2_a\\
1_d\\
1_c\\
1_b\\
1_a
\end{matrix}
&
\looparrowright&
\begin{matrix}
11_c\\
10_{cd}\\
10_{ab}\\
6_d\\
\underline{3_a}\\
\underline{2_b}\\
2_a\\
1_d\\
1_c\\
1_b\\
1_a
\end{matrix}
&
\rightarrowtail&
\begin{matrix}
11_c\\
10_{cd}\\
10_{ab}\\
6_d\\
5_{ab}\\
\underline{2_a}\\
\underline{1_d}\\
1_c\\
1_b\\
1_a
\end{matrix}
&
\rightarrowtail&
\begin{matrix}
11_c\\
10_{cd}\\
10_{ab}\\
6_d\\
5_{ab}\\
3_{ad}\\
\underline{1_c}\\
\underline{1_b}\\
1_a
\end{matrix}
&
\rightarrowtail&
\begin{matrix}
11_c\\
10_{cd}\\
10_{ab}\\
6_d\\
5_{ab}\\
3_{ad}\\
2_{bc}\\
1_a
\end{matrix}
\end{array}\,\cdot
\]
\subsection{From $\Ee$ to $\Od$}
Let us consider the following machine $\Psi$:
\begin{itemize}
\item[\Soo:]For a sequence $\nu= \nu_1,\ldots,\nu_t$, take the greastest $i\leq t$ such that $\nu_i\in \Sc$ if it exists.  If $\nu_{i+1}\in\Pp$ and $\beta(\nu_i)\not\succ \nu_{i+1}$, then replace 
\begin{equation}
(\nu_i,\nu_{i+1}) \looparrowright (\nu_{i+1}+1,\nu_i-1)\in \Pp\times\Sc
\end{equation}
and redo \Soo. We say that the parts $\nu_i,\nu_{i+1}$ are \textit{crossed}. Otherwise, move to \Stt. If there are no more parts in $\Sc$, exit from the machine.\\
\item[\Stt:]For $\nu= \nu_1,\ldots,\nu_t$, take the the greatest $i\leq t$ such that $\nu_i\in \Sc$. By \Soo, it satisfies $\beta(\nu_i)\succ \nu_{i+1}$. Then replace
\begin{equation}
\begin{array}{l c l l}
\nu_{j+1} &\leftarrow& \nu_{j}& \text{for all}\quad t\geq j>i\,\, \\
(\nu_i) &\rightrightarrows& (\alpha(\nu_i),\beta(\nu_i))&\text{as a pair of parts in }\Pp \,,
\end{array}
\end{equation}
and move to \Soo.  We say that the part $\nu_i$ \textit{splits}. We observe that $\nu$ gains two parts in $\Pp$ and loses one part in $\Sc$. 
The new sequence  is $\nu = \nu_1,\ldots,\nu_{t+1}$.\\
\end{itemize}
Let $\Psi(\nu)$ be the resulting sequence after putting  any $\nu=(\nu_1,\ldots,\nu_t)\in \Ee$ in $\Psi$.
This transformation preserves the size and the product of primary colors of partitions. For example, applying this to $(11_c,10_{cd},10_{ab},6_d,5_{ab},3_{ad},2_{bc},1_a)$ gives
\begin{tiny}
\[ \begin{array}{ccccccccccccccccc}
\begin{matrix}
11_c\\
10_{cd}\\
10_{ab}\\
6_d\\
5_{ab}\\
3_{ad}\\
1_c+1_b\\
1_a
\end{matrix} &
\rightrightarrows&
\begin{matrix}
11_c\\
10_{cd}\\
10_{ab}\\
6_d\\
5_{ab}\\
2_a+1_d\\
1_c\\
1_b\\
1_a
\end{matrix} &
\rightrightarrows&
\begin{matrix}
11_c\\
10_{cd}\\
10_{ab}\\
6_d\\
3_a+2_b\\
2_a\\
1_d\\
1_c\\
1_b\\
1_a
\end{matrix}&
\rightrightarrows&
\begin{matrix}
11_c\\
10_{cd}\\
5_b+5_a\\
6_d\\
3_a\\
2_b\\
2_a\\
1_d\\
1_c\\
1_b\\
1_a
\end{matrix}
&
\looparrowright&
\begin{matrix}
11_c\\
10_{cd}\\
7_d\\
5_a+4_b\\
3_a\\
2_b\\
2_a\\
1_d\\
1_c\\
1_b\\
1_a
\end{matrix}
&
\rightrightarrows&
\begin{matrix}
11_c\\
5_d+5_c\\
7_d\\
5_a\\
4_b\\
3_a\\
2_b\\
2_a\\
1_d\\
1_c\\
1_b\\
1_a
\end{matrix}
&
\looparrowright&
\begin{matrix}
11_c\\
8_d\\
5_c+4_d\\
5_a\\
4_b\\
3_a\\
2_b\\
2_a\\
1_d\\
1_c\\
1_b\\
1_a
\end{matrix}
&
\looparrowright&
\begin{matrix}
11_c\\
8_d\\
6_a\\
4_d+4_c\\
4_b\\
3_a\\
2_b\\
2_a\\
1_d\\
1_c\\
1_b\\
1_a
\end{matrix}&
\rightrightarrows&
\begin{matrix}
11_c\\
8_d\\
6_a\\
4_d\\
4_c\\
4_b\\
3_a\\
2_b\\
2_a\\
1_d\\
1_c\\
1_b\\
1_a
\end{matrix}
\end{array}\,\cdot
\]  
\end{tiny}
\section{Proof of the well-definedness of Bressoud's maps}\label{sct4}
In this section, we will show the following proposition.
\begin{prop}\label{pr}
The transformation $\Phi$ describes a mapping from $\Od$ to $\Ee$ such that $\Psi\circ\Phi = Id_{\Od}$, and  $\Psi$  describes a mapping from $\Ee$ to $\Od$ such that $\Phi\circ\Psi=Id_{\Ee}$.  
\end{prop}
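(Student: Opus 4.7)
The plan is to analyze each machine \emph{one round at a time}: for $\Phi$, a round consists of one Step 1 merger followed by the complete cascade of Step 2 crossings until the machine re-enters Step 1; analogously for $\Psi$, a round is one Step 1 cascade followed by one Step 2 split. I would then prove by induction on the number of rounds (i) that each round of $\Phi$ takes an element of $\Od$ to a shorter element of $\Od$ (or to the final output already in $\Ee$), (ii) the corresponding statement for $\Psi$, and (iii) that a round of $\Psi$ exactly reverses a round of $\Phi$. Combining (i)--(iii) yields $\Phi(\Od) \subseteq \Ee$, $\Psi(\Ee) \subseteq \Od$, and the two inverse identities simultaneously.

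For $\Phi$, termination of Step 2 is clear since each crossing strictly decreases the position of the unique secondary part not yet $\gg$-ordered with its left neighbor, and each Step 1 merger strictly decreases the total number of parts. To maintain $\gg$-orderings through the Step 2 cascade, I would use \Lem{lem1}: the equivalence \eqref{oe} certifies that the crossing $(\la_i,\la_{i+1}) \looparrowright (\la_{i+1}+1,\la_i-1)$ automatically produces a $\gg$-ordered pair, and \eqref{eo} furnishes the right halting criterion with respect to the right neighbor. The most delicate case is when the moving secondary part eventually lands adjacent to another secondary part on its left; here \Lem{lem2} supplies the essential dichotomy \eqref{sw}, namely that either the two secondary parts are cleanly separated by $\beta \succ \alpha$, or the pair matches one of the exceptional patterns $(k_{cd},k_{ab})$ or $((k+1)_{ad},k_{bc})$. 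Membership of the output in $\Ee$ (rather than merely in $\E$) is exactly the content of \Lem{lem3}: forbidden-pattern avoidance reduces to the numerical inequality \eqref{ee2} on non-adjacent parts, and I would track this inequality through each crossing back to the original assumption $\la \in \Od$ (where $\la_{i-1} \succ \la_i \succ \la_{i+1}$ already implies a two-step gap of at least $2$).

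For $\Psi$, the analysis is symmetric: Step 1 crossings are justified by reading \Lem{lem1} in reverse, and when Step 2 fires the guaranteed condition $\beta(\nu_i)\succ\nu_{i+1}$ ensures that the split $\nu_i\rightrightarrows(\alpha(\nu_i),\beta(\nu_i))$ yields a $\succ$-ordered primary pair, with the relation to any secondary left neighbor controlled by \eqref{sw1}. The inverse identities then follow by checking that one round of $\Psi$ targets the rightmost secondary part, which is precisely the one most recently created by the last $\Phi$-round, and then undoes the $\Phi$-cascade step by step, each crossing being its own inverse. The main obstacle I anticipate will be showing that during a $\Psi$-round the targeted secondary part never needs to cross past a secondary left neighbor; this is the configuration ruled out precisely by the forbidden-pattern hypothesis via \Lem{lem3}, and verifying it requires a careful case analysis of the two exceptional secondary-secondary adjacencies $(k_{cd},k_{ab})$ and $((k+1)_{ad},k_{bc})$ flagged by \Lem{lem2}, together with the special status of the admissible pattern $(3_{ad},2_{bc},1_a)$.
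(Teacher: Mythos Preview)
Your overall strategy---round-by-round induction using \Lem{lem1}, \Lem{lem2}, \Lem{lem3}---is the paper's approach, but your stated invariant (i) is wrong and would make the induction collapse. After one round of $\Phi$ the sequence contains a secondary part, so it is \emph{not} an element of $\Od$; nor is it yet in $\Ee$, since further troublesome primary pairs may remain. The correct invariant, which the paper formulates as \Prp{pr1}, is that after the $u^{\text{th}}$ round the sequence decomposes as a concatenation $(\gamma^u,\mu^u)\in\Ee\times\Od$, where $\gamma^u$ is a $\gg$-ordered prefix whose last part is the $u^{\text{th}}$ secondary part, $\mu^u$ is a $\succ$-ordered primary suffix, $l(\gamma^u)\gg g(\mu^u)$, and $\gamma^u$ is an initial segment of $\gamma^{u+1}$. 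Without this hybrid structure you cannot even state the inductive step, since the input to round $u+1$ is not in $\Od$.

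Two further issues. First, you have the roles of \eqref{sw1} and \eqref{sw} reversed: the paper uses \eqref{sw1} in the $\Phi$ direction (to certify that a newly created secondary part is $\gg$-dominated by the previous one when they become adjacent) and \eqref{sw} in the $\Psi$ direction (to guarantee that the next secondary part, after crossing rightward and splitting, does so before reaching $\beta$ of the previously split part). Second, your description of the $\Psi$ obstacle is directionally confused: in $\Psi$ the targeted secondary part is the \emph{rightmost} one and it crosses primary parts to its \emph{right}, so the issue is not crossing a secondary left neighbor but rather ensuring the next-to-last secondary part, when its turn comes, splits before overtaking the primary halves produced by the previous split. Finally, the forbidden-pattern avoidance for $\Phi$ is not a direct consequence of \Lem{lem3}; the paper carries out an explicit parity-based case analysis on the lower half of the second secondary part versus the primary part to its right, and this is where the exceptional status of $(3_{ad},2_{bc},1_a)$ emerges.
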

\subsection{Well-definedness of $\Phi$}
In this subsection, we will show the following proposition.
\begin{prop}\label{pr1}
Let us consider any $\la=(\la_1,\ldots,\la_t)\in \Od$, and set $\gamma^0=0$, $\mu^0=\lambda$. Then, in the process $\Phi$ on $\la$, at the $u^{th}$ passage from \St to \Soo, 
there exists a pair of partitions $\gamma^u,\mu^u\in \Ee\times \Od$ such that the sequence obtained is $\gamma^u,\mu^u$. 
Moreover, if we denote by $l(\gamma^u)$ and $g(\mu^u)$ respectively the smallest part of $\gamma^u$ and the greatest part of $\mu^u$, we then have that 
\begin{enumerate}
\item $l(\gamma^u)$ is the $u^{th}$ element in $\Sc$ of $\gamma^u$, 
\item $l(\gamma^u)\gg g(\mu^u)$ so that the partition $(\gamma^u,g(\mu^u))$ is in $\E_1$, 
\item for any $u$, $\gamma^u$ is the beginning of the partition $\gamma^{u+1}$ and the number of parts of $\mu^{u+1}$ is at least two less than the number of parts of $\mu^{u}$.
\end{enumerate}     
\end{prop}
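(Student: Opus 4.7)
The plan is to proceed by strong induction on $u$. The base case $u=0$ is immediate with $\gamma^0=\emptyset \in \Ee$ and $\mu^0=\la \in \Od$, so that properties (1) and (2) hold vacuously.

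For the inductive step, assume the decomposition $\gamma^u, \mu^u$ satisfies (1)--(3) at the $u$-th passage from \Stt to \Soo. When \Soo resumes, since $\gamma^u \in \Ee$ every consecutive pair inside $\gamma^u$ is ordered by $\gg$, and property (2) handles the boundary pair $(l(\gamma^u), g(\mu^u))$. Therefore the smallest troublesome pair, if it exists, lies strictly inside $\mu^u$; if none exists, the machine exits and we are done. Otherwise \Soo merges that pair, producing by \eqref{nog} a new secondary part $\sigma \in \Sc$, and passes control to \Stt. A short direct check using \eqref{half1}--\eqref{half2} shows that $\sigma \gg m_r$ already holds for the primary part $m_r$ originally to the right of the merged pair, so well-ordering on the right of $\sigma$ persists throughout the ensuing swaps.

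In \Stt, $\sigma$ travels leftward through the primary parts of $\mu^u$ via swaps $(l_p, k_{pq}) \looparrowright ((k+1)_{pq}, (l-1)_p)$ justified by \eqref{oe} of Lemma 1; and the new right-hand primary $(l-1)_p$ remains $\succ$-ordered with respect to what follows. The process halts when $\sigma$'s left neighbor $\eta$ is either primary with $\eta \gg \sigma$, or secondary. A secondary left neighbor can only come from $\gamma^u$, since $\mu^u$ is purely primary; moreover $\sigma$ cannot penetrate into $\gamma^u$, because $\gamma^u$ is $\gg$-well-ordered and $l(\gamma^u)\in \Sc$ rules out any further $\Pp\times\Sc$ swap at the boundary. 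I then take $\gamma^{u+1}$ to be the prefix up to and including $\sigma$, and $\mu^{u+1}$ to be the remaining primary suffix. Properties (1) and (3) then follow at once by construction.

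The main obstacle is to verify $\gamma^{u+1}\in \Ee$ together with $l(\gamma^{u+1})\gg g(\mu^{u+1})$. If $\eta$ is primary, then $\eta \gg \sigma$ is precisely the termination condition, $\sigma \gg g(\mu^{u+1})$ holds by \eqref{oe} applied at the non-swap, and no forbidden pattern of \eqref{forb} can involve $\sigma$ as its middle term since those patterns have a secondary first entry. The delicate case is $\eta \in \Sc$ lying in $\gamma^u$: here I invoke Lemma 2, using \eqref{sw1} together with the alternative \eqref{sw}, to track the cumulative effect of the swaps on $\alpha(\sigma)$ and $\beta(\sigma)$ against $\beta(\eta)$, and thereby establish $\eta \gg \sigma$ via \eqref{gam}. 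For the forbidden pattern condition, Lemma 3 reduces the check to the single triple $(\eta,\sigma, g(\mu^{u+1}))$; a color case analysis on the two dangerous configurations $(\eta,\sigma)=(k_{cd},k_{ab})$ and $(\eta,\sigma)=((k+1)_{ad},k_{bc})$, again using \eqref{gam} combined with the swap history recorded by \Stt, will yield the strict inequality $\eta \succeq g(\mu^{u+1})+2$ of \eqref{ee2}. The unique permitted exception $(3_{ad},2_{bc},1_a)$ corresponds precisely to the extremal case where \Soo merges $\sigma=2_{bc}$ directly on top of an untouched $3_{ad}\in \gamma^u$ at the smallest feasible values, and must be carried through separately.
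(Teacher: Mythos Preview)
Your outline follows the paper's strategy closely (induction on $u$, Lemmas~\ref{lem1}--\ref{lem3}, same split into the primary-$\eta$ and secondary-$\eta$ cases), and the easy parts are fine: the location of the troublesome pair inside $\mu^u$, the right-hand ordering $\sigma\gg m_r$, the boundary stop at $l(\gamma^u)\in\Sc$, and properties (1) and (3). The real gap is the ``delicate case'' $\eta=l(\gamma^u)\in\Sc$. You propose to obtain $\eta\gg\sigma$ from \eqref{gam}/\eqref{sw1} by comparing $\beta(\eta)$ with $\alpha(\sigma)$ or $\beta(\sigma)$, but $\beta(\eta)$ is \emph{not} controlled by your inductive hypothesis (1)--(3): it depends on which troublesome pair produced $\eta$ and how many swaps $\eta$ underwent at stage $u$. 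Indeed one can write down pairs $(\eta,\mu^u)$ satisfying (1)--(3) abstractly for which the next merge-and-swap produces $\sigma'$ with $\eta\not\gg\sigma'$; such states simply never arise from $\Phi$, but your hypothesis does not exclude them. The paper closes this gap by expressing \emph{both} $\eta$ and $\sigma$ in terms of original $\la$-indices, separating two sub-cases: the next troublesome pair is $(\la_{i-1}-1,\la_{i+2})$ at the boundary of $\mu^u$ (where \eqref{sw1} applies with $\beta(\la_i+\la_{i+1})=\la_{i+1}\succ\la_{i+2}=\beta(\la_{i-1}+\la_{i+2}-1)$), or it is $(\la_{i+2+x},\la_{i+3+x})$ further in (where a direct size count via \eqref{gam} gives $(\la_i+\la_{i+1})\gg(\la_{i+2+x}+\la_{i+3+x}+2x)$, and in fact $\triangleright$ once $x\geq1$). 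You need to carry this origin-tracking through the induction, or strengthen the hypothesis accordingly.

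The forbidden-pattern check is likewise more delicate than your sketch suggests. The paper first uses the $x\geq1$ case above to show that a pair $(\eta,\sigma')$ of the dangerous shape $(k_{cd},k_{ab})$ or $((k+1)_{ad},k_{bc})$ can only occur when the second troublesome pair is immediately adjacent ($x=0$ or the boundary case). It then splits according to whether the primary part $l_s$ to the right of the pattern was itself crossed by both secondaries or by neither, and in the latter situation performs four explicit parity computations on $(\alpha,\beta)$; the allowed exception $(3_{ad},2_{bc},1_a)$ drops out of the sub-case $((2k'+1)_{ad},2k'_{bc},l_s)$ with $k'=1$, $l_s=1_a$. Your phrase ``a color case analysis \dots\ will yield $\eta\succeq g(\mu^{u+1})+2$'' points in the right direction but does not do this work, and your description of the exception as ``$\sigma=2_{bc}$ directly on top of an untouched $3_{ad}$'' is the right picture but still needs the parity analysis to justify that no other triple slips through.
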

\begin{proof}
Let $\la = (\la_1,\ldots,\la_t)$ be a partition in $\Od$. Let us set $c_1,\ldots,c_t$ to be the primary colors of the parts $\la_1,\ldots,\la_t$.
Now consider the first troublesome pair $\la_i,\la_{i+1}\in \Pp$ obtained at \So in $\Phi$, and the first resulting secondary part $\la_i+\la_{i+1}$. Note that this is reversible by \St of $\Psi$.
\begin{itemize}
\item If there is a part $\la_{i+2}$ after $\la_{i+1}$, we have that
\begin{align*}
\la_i+\la_{i+1}-\la_{i+2}&= \chi(c_i<c_{i+1})+2\la_{i+1}-\la_{i+2}\quad \text{by \eqref{nog}}\\
&\geq \chi(c_i<c_{i+1})+2\chi(c_{i+1}\leq c_{i+2})+\la_{i+2}\quad \text{by \eqref{lex}}\\
&\geq 1+\chi(c_i\leq c_{i+2})+\chi(c_{i+1}\leq c_{i+2})\,\cdot
\end{align*}
Since by \eqref{orD}, we have that $c_i> c_{i+2}$ and $c_{i+1}> c_{i+2}$ implies $c_ic_{i+1}>c_{i+2}$, we then have that
$\la_i+\la_{i+1}-\la_{i+2}\geq 1+\chi(c_ic_{i+1}\leq c_{i+2})$, and we conclude that $\la_i+\la_{i+1}\gg \la_{i+2}$.\\
\item The primary parts to the left of $\la_i$ are well-ordered by $\gg$. We then have
\[\la_1\gg\cdots\gg \la_{i-1}\gg \la_i\,\cdot\]
We obtain by \eqref{Ordd} and  \eqref{aj} that for any $j<i$, 
\begin{align*}
\la_{j}\gg\la_{i}+i-j-1\succeq \alpha(\la_i+\la_{i+1}+i-j-1)
\end{align*}
so that by \eqref{Ordd}, $\la_{j}\gg \alpha(\la_i+\la_{i+1}+i-j-1)$. If after $i-j$ iterations of \Stt, $\la_i+\la_{i+1}$ crosses
$\la_j$, we will have at the same time that 
\begin{align}
(\la_i+\la_{i+1}+i-j)&\gg (\la_j-1)\gg\cdots\gg\la_{i-1}-1\quad\text{(by \eqref{oe})}\label{rg}\\
\beta(\la_i+\la_{i+1}+i-j)&\not\succ (\la_j-1)\gg\cdots\gg\la_{i-1}-1\quad\text{(by \eqref{eo})}\label{reverse1}\,,
\end{align}
and so these iterations are reversible by \So in $\Psi$ (recursively on $j\leq j'<i$).
\item We also have by \eqref{Ordd} that 
\begin{align*}
\la_{i-1}\gg\la_i\succ\la_{i+1}\succ \la_{i+2} &\Longrightarrow \la_{i-1}-1\succeq \la_i\succ\la_{i+1}\succ \la_{i+2}\\
&\Longrightarrow \la_{i-1}-1\succ\la_{i+2}\,\cdot
\end{align*}
\item If we can no longer apply \St after $i-j$ iterations,  we then obtain
\[\la_1\gg\cdots\gg\la_{j-1}\gg(\la_i+\la_{i+1}+i-j)\gg (\la_j-1)\gg\cdots\gg\la_{i-1}-1\succ \la_{i+2}\succ\cdots \succ \la_t\]
and we set
\begin{align}
\gamma^1&=\la_1\gg\cdots\gg(\la_i+\la_{i+1}+i-j)\\
\mu^1&=(\la_j-1)\gg\cdots\gg\la_{i-1}-1\succ \la_{i+2}\succ\cdots\succ\la_t\,,\label{pousse}
\end{align}
and the conditions in the proposition are respected. In fact, even if $j=i$, we saw that $\la_i+\la_{i+1}\gg\la_{i+2}$.\\
\end{itemize}
Now, by applying \So for the second time, we see by \eqref{pousse} that the next troublesome pair is either $\la_{i-1}-1,\la_{i+2}$, or  $\la_{i+2+x},\la_{i+3+x}$ for some $x\geq 0$.
\begin{itemize}
\item If $\la_{i-1}-1\not \gg\la_{i+2}$, this means that $\la_{i-1}-1,\la_{i+2}$ are consecutive for $\succ$, and  \So occurs there. By \eqref{sw1}, we have that 
$(\la_i+\la_{i+1}+1)\gg (\la_{i-1}+\la_{i+2}-1)$.
Then, even if $(\la_{i-1}+\la_{i+2}-1)$ crosses the primary parts $(\la_j-1)\gg\cdots\gg\la_{i-2}-1$ after $i-j-1$ iterations of \Stt, by \eqref{Ordd}, we will still have that 
\[(\la_i+\la_{i+1}+i-j)\gg(\la_{i-1}+\la_{i+2}+i-j-2)\,\cdot\]
\item If $\la_{i-1}-1 \gg\la_{i+2}$, then the next troublesome pair appears at 
$\la_{i+2+x},\la_{i+3+x}$ for some $x\geq 0$, and it forms the secondary part $\la_{i+2+x}+\la_{i+3+x}$.
We also have 
\begin{equation}
\la_i\succ\la_{i+1}\succ\la_{i+2}\gg \cdots \gg \la_{i+2+x}\succ\la_{i+3+x}\,\cdot
\end{equation}
By \eqref{Ordd}, we can easily check that
\[
\la_i\succ\la_{i+1}\succ\la_{i+2} \succeq \la_{i+2+x}+x\succ\la_{i+3+x}+x
\]
so that, by \eqref{gam}, 
\[
(\la_i+\la_{i+1})\gg (\la_{i+2+x}+\la_{i+3+x}+2x)\,\cdot
\]
This means by \eqref{Ordd} that, 
\begin{equation}
(\la_i+\la_{i+1})\gg (\la_{i+2+x}+\la_{i+3+x}+x)
\end{equation}
and, as soon as $x\geq 1$, by \eqref{Ordre}
\begin{equation}\label{must}
(\la_i+\la_{i+1})\triangleright (\la_{i+2+x}+\la_{i+3+x}+x)\, \cdot
\end{equation}
We then obtain that, even if the secondary part $\la_{i+2+x}+\la_{i+3+x}$ crosses,  after $x+i-j$ iterations of \Stt, the primary parts
\[\la_j-1\gg\cdots\gg (\la_{i-1}-1)\gg \la_{i+2}\gg \cdots \gg \la_{i+1+x}\,,\]
we will still have
\[(\la_i+\la_{i+1}+i-j)\gg(\la_{i+2+x}+\la_{i+3+x}+x+i-j)\,\cdot\]
\end{itemize}
However, as soon as $x\geq 1$, we directly have 
\[(\la_i+\la_{i+1}+i-j)\triangleright(\la_{i+2+x}+\la_{i+3+x}+x+i-j)\,\cdot\]
In that case, the pair $(\la_i+\la_{i+1}+i-j,\la_{i+2+x}+\la_{i+3+x}+x+i-j)$ cannot have the form $(k_{cd},k_{ab})$ or $((k+1)_{ad},k_{bc})$. In order to have these patterns, we must necessarily have that the second 
troublesome pair is  either $(\la_{i-1}-1,\la_{i+2})$ or $(\la_{i+2},\la_{i+3})$.
In both cases, we can see that either both parts crossed the primary part $l_s$ to the right of the pattern,  or they do not move backward, so that the lower half of the second secondary part is greater than the primary part $l_s$ to the right of the pattern in terms of $\succ$. In the first case, we have that
\begin{align*}
&(l+2)_s \not \gg\la_i+\la_{i+1}+i-j-1\\
\Longleftrightarrow\quad&(l+2)_s \not \succeq \la_i+\la_{i+1}+i-j \quad \text{by \eqref{Ordd}}
\end{align*}
and then $\la_i+\la_{i+1}+i-j\succ (l+2)_s$, so that the forbidden patterns in \eqref{forb} do not occur. 
In the second case, we check the different subcases:
\begin{align*}
(2k'_{cd},2k'_{ab},l_s)&\Longrightarrow k'_a\succ l_s\\
&\Longrightarrow k'-l\geq \chi(a\leq s)=1\\
&\Longrightarrow 2k'-l\geq l+2\geq 3 \\
&\Longrightarrow 2k'-(l+2)\geq 1\geq \chi(cd \leq s)\,,
\end{align*}
\begin{align*}
((2k'+1)_{cd},(2k'+1)_{ab},l_s)&\Longrightarrow k'_b\succ l_s\\
&\Longrightarrow k'-l\geq \chi(b\leq s)\\
&\Longrightarrow 2k'+1-l\geq l+1+2\chi(b\leq s)\\
&\Longrightarrow 2k'+1-l\geq 2+2\chi(b\leq s)\\
&\Longrightarrow (2k'+1)-(l+2)\geq \chi(b\leq s)\geq \chi(cd\leq s)\quad\text{since}\quad b<cd\,\cdot 
\end{align*}
\begin{align*}
((2k'+2)_{ad},(2k'+1)_{bc},l_s)&\Longrightarrow k'_c\succ l_s\\
&\Longrightarrow k'-l\geq \chi(c\leq s)\\
&\Longrightarrow 2k'-l\geq l+2\chi(c\leq s)\\
&\Longrightarrow 2k'-l\geq 1+2\chi(c\leq s)\\
&\Longrightarrow (2k'+2)-(l+2)\geq 1+\chi(c\leq s)\geq \chi(ad\leq s)\,\cdot
\end{align*}
\begin{align*}
((2k'+1)_{ad},2k'_{bc},l_s)&\Longrightarrow k'_b\succ l_s\\
&\Longrightarrow k'-l\geq \chi(b\leq s)\\
&\Longrightarrow 2k'+1-l\geq l+1+2\chi(b\leq s)\\
&\Longrightarrow 2k'+1-(l+2)\geq l-1+\chi(b\leq s)\geq 1-\chi(l=1)\chi(s=a)\\
\end{align*}
We can see that only $(3_{ad},2_{bc},1_a)$ does not satisfy the fact that $(l+2)_s$ is less than the first part $\la_i+\la_{i+1}+i-j$. Recall that the second part needs to be greater than $(l+1)_s$.
By \Lem{lem3}, we then have the forbidden patterns 
\[((k+2)_{cd},(k+2)_{ab},k_c),((k+2)_{cd},(k+2)_{ab},k_d), ((k+2)_{ad},(k+1)_{bc},k_{a})\,,\]
with only $(3_{ad},2_{bc},1_a)$ allowed. The conditions in the proposition are satisfied after the second move from \St to \Soo.
\\\\By induction, \Prp{pr1} follows. Moreover, by \eqref{reverse1}, every single step is reversible by $\Psi$, since by its application 
the sequence $\gamma^{u+1},\mu^{u+1}$ becomes exactly after the iterations of \So and splitting in \St the sequence $\gamma^u,\mu^u$ (with the last part of $\gamma^{u+1}$). 
\end{proof}
The fact that $\Phi(\Od)\subset \E_1$ follows from \Prp{pr1} since $\mu^u$ strictly decreases in terms of number of parts and the process stops as soon as either $\mu^u$ has at most one part, or all its primary parts are well-ordered by $\gg$. And the reversibility implies that $\Psi\circ \Phi_{|\Od }= Id_{\Od}$.
\subsection{Well-definedness of $\Psi$}
In this subsection, we will show the following proposition.
\begin{prop}\label{pr2}
Let us consider any $\nu=\nu_1,\ldots,\nu_t\in \Ee$, and set $\gamma^0=\nu$, $\mu^0=0$. Then, in the process $\Psi$ on $\nu$, at the $u^{th}$ passage  from \St to \Soo, there exists a pair of partitions $\gamma^u,\mu^u\in \Ee\times \Od$ such that the sequence obtained is $\gamma^u,\mu^u$. Moreover, if we denote by $l(\gamma^u)$ and $g(\mu^u)$ respectively the smallest part of $\gamma^u$ and the greatest part of $\mu^u$, we then have that 
\begin{enumerate}
\item $l(\gamma^u)\in \Pp$, 
\item $l(\gamma^u)$and $g(\mu^u)$ are consecutive for $\succ$,
\item for any $u$, $\mu^u$ is the tail of the partition $\mu^{u+1}$ and the number of secondary parts of $\gamma^u$ decreases by one at each step. 
\end{enumerate}    
\end{prop}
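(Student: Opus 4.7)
The plan is to prove the proposition by induction on $u$, in a manner exactly dual to the proof of \Prp{pr1}. The base case $u=0$, with $\gamma^0 = \nu$ and $\mu^0$ empty, is trivial.

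For the inductive step, assume the three claims hold at the $u$-th passage. Let $\sigma_0$ be the rightmost secondary of $\gamma^u$ and let $\pi_1 \gg \cdots \gg \pi_r$ be the primary parts of $\gamma^u$ following $\sigma_0$, so that $\pi_r = l(\gamma^u)$ with $r \geq 1$ by claim (1). During the $(u+1)$-th cycle, \Soo iteratively applies the cross $(\sigma_0 - j, \pi_{j+1}) \looparrowright (\pi_{j+1}+1, \sigma_0 - j - 1)$ whenever $\beta(\sigma_0 - j) \not\succ \pi_{j+1}$, halting at some value $j^*$. I would first invoke equation \eqref{eo} of \Lem{lem1} to rewrite each cross as $(\pi_{j+1}+1) \gg \alpha(\sigma_0 - j)$; this immediately yields that the incremented primaries $\pi_1+1, \ldots, \pi_{j^*}+1$ remain $\gg$-ordered with one another (since simultaneously incrementing two primaries by $1$ preserves their $\gg$-relation) and with $\alpha(\sigma_0 - j^*)$ at the end. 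A short size comparison, using $\rho \gg \sigma_0 \gg \pi_1$ in $\gamma^u$ (where $\rho$ is the part of $\gamma^u$ immediately preceding $\sigma_0$), then extends this to $\rho \gg \pi_1 + 1$, so the prefix of $\gamma^u$ before $\sigma_0$ still extends properly. Then \Stt splits $\sigma_0 - j^*$ into $(\alpha(\sigma_0 - j^*), \beta(\sigma_0 - j^*))$, and we define $\gamma^{u+1}$ as the prefix ending at $\alpha(\sigma_0 - j^*)$ and $\mu^{u+1}$ as the suffix beginning at $\beta(\sigma_0 - j^*)$. Claim (1) is immediate, claim (3) follows by construction since $\gamma^{u+1}$ has exactly one fewer secondary and $\mu^{u+1} = (\beta(\sigma_0-j^*), \pi_{j^*+1}, \ldots, \pi_r, \mu^u)$ literally contains $\mu^u$ as a tail, and claim (2) follows from equations \eqref{half1}--\eqref{half2} and \eqref{nog}, which together imply $\alpha(\tau) \succ \beta(\tau)$ and $\alpha(\tau) \not\gg \beta(\tau)$ for any $\tau \in \Sc$; this is exactly the "consecutive for $\succ$" condition characterizing troublesome primary pairs.

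The main obstacle is twofold. First, one must verify that $j^* \leq r$, so that the crossings in \Soo do not spill into $\mu^u$ and thereby disturb the tail property in claim (3). Using the inductive hypothesis that $\pi_r$ and $g(\mu^u)$ form a consecutive-for-$\succ$ primary pair (equivalently by \eqref{nog}, they sum to the secondary $\sigma_u$ split during the $u$-th cycle), together with \Lem{lem2} applied to $\sigma_0 - r$, one derives $\beta(\sigma_0 - r) \succ g(\mu^u)$, so \Soo must halt before $\sigma_0$ reaches $g(\mu^u)$. Second, one must show $\gamma^{u+1} \in \Ee$, which reduces to ruling out the three forbidden patterns at the newly formed tail. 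The only location where a new forbidden pattern could arise is the triple $(\sigma_{-2}, \sigma_{-1}, \pi_1+1)$, where $\sigma_{-1}, \sigma_{-2}$ are the two secondaries of $\gamma^u$ immediately preceding $\sigma_0$ (assuming they exist and are adjacent with no intervening primary). A case-by-case check against each candidate $((k+2)_{cd},(k+2)_{ab},k_c)$, $((k+2)_{cd},(k+2)_{ab},k_d)$, and $((k+2)_{ad},(k+1)_{bc},k_a)$, exploiting the chain $\sigma_{-1} \gg \sigma_0 \gg \pi_1$ in $\gamma^u$ together with the color order \eqref{cons1}, derives a contradiction for each disallowed pattern; the exception $(3_{ad}, 2_{bc}, 1_a)$ arises exactly in the equality-edge case, precisely matching the allowed exception in \Lem{lem3}. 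Finally, the reversibility of each individual cross and split closes the induction and yields $\Phi \circ \Psi = \mathrm{Id}_{\Ee}$ upon termination.
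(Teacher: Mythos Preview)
Your overall strategy matches the paper's: induct on $u$, use \Lem{lem1} to control the crossed primaries, and use \Lem{lem2} to bound how far the moving secondary can travel. But the induction as you have set it up does not close.

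The central gap is in your ``first obstacle.'' To apply \Lem{lem2} (specifically \eqref{sw}) and conclude $\beta(\sigma_0-r)\succ g(\mu^u)$, you need the relation $(\sigma_0-r+1)\gg\tau$, where $\tau=\pi_r+g(\mu^u)$ is the secondary that was split at stage $u$. This relation does \emph{not} follow from your stated inductive hypothesis (claims (1)--(3) together with $\gamma^u\in\Ee$). Concretely, the state $\gamma^u=(3_{cd},2_b)$, $\mu^u=(2_a,\ldots)$ satisfies all three claims and lies in $\Ee\times\Od$, yet here $\sigma_0=3_{cd}$ crosses $\pi_1=2_b$ (since $\beta(3_{cd})=1_d\not\succ 2_b$) and then crosses $g(\mu^u)=2_a$ as well (since $\beta(2_{cd})=1_c\not\succ 2_a$), spilling into $\mu^u$. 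This state cannot actually arise from $\Psi$, but that is exactly the extra structure your hypothesis omits. The paper avoids this by arguing from the chain $\nu_{i-x}\gg\cdots\gg\nu_{i-1}\gg\nu_i$ that held in $\gamma^{u-1}$ \emph{before} the previous split; from that chain one gets $\nu_{i-x}-x+1\gg\nu_i$ and hence the shifted version needed for \eqref{sw}. You must either strengthen the inductive hypothesis (e.g.\ record that replacing $\pi_r$ by $\tau$ in $\gamma^u$ still yields a $\gg$-ordered sequence) or, as the paper effectively does, trace back to $\gamma^{u-1}$.

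There are also smaller gaps. Your base case $u=0$ is not trivial: $l(\nu)$ may be secondary, so $r=0$ is possible at the first passage and must be handled on its own (the paper does this, including the check that the shrinking secondary never drops to size~$1$, and the special tail $(3_{ad},2_{bc},1_a)$). Your ``short size comparison'' for $\rho\gg\pi_1+1$ silently assumes $\Delta(c(\rho),c(\sigma_0))\geq 1$; when $(c(\rho),c(\sigma_0))\in\{(cd,ab),(ad,bc)\}$ this fails, and one must instead invoke the forbidden-pattern condition via \Lem{lem3} (equation \eqref{ee2}), exactly as the paper does. Finally, you only argue $\rho\gg\pi_1+1$, not $\rho\gg\alpha(\sigma_0)$ in the case $j^*=0$.
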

\begin{proof}
If the pattern $(3_{ad},2_{bc},1_a)$ is in $\nu$, these parts are then the last ones. By applying $\Psi$, we obtain after the second passage at the tail of the partition the sequence $2_a,1_d,1_c,1_b,1_a$. Now suppose that this pattern does not occur in $\nu$.
Let us consider the last secondary part $\nu_i$ of $\nu$. 
\begin{itemize}
\item Suppose that \So does not occur and we directly have \Stt. If there is a part $\nu_{i-1}$ to its left, and $(\nu_{i-1},\nu_i) \notin \{((k+1)_{ad},k_{bc}),(k_{cd},k_{ab})\}$, we then have $\nu_{i-1}\triangleright \nu_i$ and 
\begin{align*}
\nu_{i-1}-\alpha(\nu_i)&= \nu_{i-1}-\nu_i+\beta(\nu_i)\\
&\geq 2\quad\quad (\text{by \eqref{Ordre} and the fact that}\,\,\beta(\nu_i)\geq 1)\,,
\end{align*}
so that $\nu_{i-1}\gg\alpha(\nu_i)$. In the case that $(\nu_{i-1},\nu_i) \in \{((k+1)_{ad},k_{bc}),(k_{cd},k_{ab})\}$, a quick check according to the parity of $k$ shows that we also have $\nu_{i-1}\gg\alpha(\nu_i)$.
If we have the pattern $(\nu_{i-2},\nu_{i-1})\in\{((k+1)_{ad},k_{bc}),(k_{cd},k_{ab})\}$, then $\nu_i\preceq \nu_{i-2}-2$, and
\begin{align*}
\nu_{i-2}-\alpha(\nu_i)&= \nu_{i-2}-\nu_i+\beta(\nu_i)\\
&\geq 3\quad\quad \text{by \eqref{Ordre} and the fact that}\,\,\beta(\nu_i)\geq 1\,\,
\end{align*}
so that $\nu_{i-2}\succ(\alpha(\nu_i)+2)$. Note that by \Lem{lem3}, this implies that $\nu_{i-2},\nu_{i-1},\alpha(\nu_i)$ cannot be a forbidden pattern.\\
 
\item If $\nu_i$ crosses after iteration of \So the primary parts $\nu_{i+1}\gg \cdots \gg \nu_{j}$, we then have 
\begin{align}
\nu_{i-1}&\gg\nu_{i+1}+1\gg\cdots \gg \nu_{j}+1\gg \alpha(\nu_i-j+i) \quad\text{(by \eqref{eo})}\label{orddd}\\
\nu_{i-1}&\gg\nu_{i+1}+1\gg\cdots \gg \nu_{j}+1\not\gg (\nu_i-j+i) \label{reverse2}\,\cdot
\end{align}
In fact, by \eqref{ee}, if  $(\nu_{i-1},\nu_i) \notin \{((k+1)_{ad},k_{bc}),(k_{cd},k_{ab})\}$, we necessarily have that $\nu_{i-1}\gg\nu_i\gg\nu_{i+1}$ so that $\nu_{i-1}\gg\nu_{i+1}+1$. 
If $(\nu_{i-1},\nu_i) \in \{((k+1)_{ad},k_{bc}),(k_{cd},k_{ab})\}$, since $\nu_{i-1}\succ(\nu_{i+1}+2)\in \Pp$, we necessarily have by \eqref{Ordd} that $\nu_{i-1}\gg(\nu_{i+1}+1)$. 
\\If we have the pattern $(\nu_{i-2},\nu_{i-1})\in\{((k+1)_{ad},k_{bc}),(k_{cd},k_{ab})\}$, then $\nu_i\preceq \nu_{i-2}-2$, and  
\[
\nu_{i-2}\succeq \nu_i+2\succ \nu_{i+1}+3\,\cdot
\]
So $\nu_{i-2}\succ\nu_{i+1}+3$, and the pattern $\nu_{i-2},\nu_{i-1},\nu_{i+1}+1$ is not forbidden.
\\Finally, since $\nu_i\gg \nu_{i+1}\gg\cdots\gg \nu_j$ and $\nu_{i+1},\ldots,\nu_j\in \Pp$, we then have by \eqref{Ordd} that
$\nu_i\gg \nu_j+j-i-1$, 
and this is equivalent by \eqref{oe} to $\nu_{j}+1\not\gg (\nu_i-j+i)$. This implies that all these iterations of \So are reversible by \St of $\Phi$.\\
\item In the case $j=t$, we have by \eqref{reverse2}, \eqref{oe} and \eqref{Ordd} that 
\[\nu_i-t+i\succ \nu_t\,\cdot \]
If we suppose that $\nu_i-t+i$ has size $1$, then $\nu_t$ has also size $1$ and a color smaller than the color of $\nu_i$. But by \eqref{half1} and \eqref{orD}, we necessarily have that $\beta(\nu_i-t+i+1)$ has size $1$ and a color greater than 
the color of $\nu_i$. We then obtain by \eqref{cons1} that 
\[\beta(\nu_i-t+i+1)\succ \nu_i-t+i\succ \nu_t\,,\]
so that we do not cross $\nu_i-t+i+1$ and $\nu_t$. This is absurd by assumption. 
In any case, after crossing, we still have that the secondary part size is greater than $1$, so that after splitting, its upper and lower halves stay in $\Pp$.\\ 
\item If we stop the iteration of \So just before $\nu_{j+1}$, this means by \eqref{orddd}  that
\begin{equation}
\nu_{i-1}\gg \nu_{i+1}+1\gg\cdots \gg \nu_{j}+1\gg\alpha(\nu_i-j+i)\succ \beta(\nu_i-j+i)\succ\nu_{j+1}\gg\cdots\gg\nu_t\,\cdot
\end{equation}
We then set 
\begin{align*}
\gamma^1 &= \nu_1\gg\cdots\gg\nu_{i-1}\gg \nu_{i+1}+1\gg \cdots \gg\nu_{j}+1\gg\alpha(\nu_i-j+i)\,,\\
\mu^1&=\beta(\nu_i-j+i)\succ\nu_{j+1}\succ\cdots\succ\nu_t\,,
\end{align*}
and we saw with all the different cases that the conditions of \Prp{pr2} are respected.\\
\end{itemize}
Let us now consider the secondary part $\nu_{i-x}$ before $\nu_i$, for some $x\geq 1$. Then, by iteration of \Soo, it can never
cross $\beta(\nu_i-j+i)$. In fact, suppose that it crosses all primary parts
$\nu_{i-x+1}\gg\cdots \gg\nu_{i-1} \gg \nu_{i+1}+1\gg\cdots \gg \nu_{j}+1$. We then obtain 
$\nu_{i-x}-x+1+i-j$, and since 
\[\nu_{i-x}\gg\nu_{i-x+1}\gg\cdots\gg\nu_{i-1}\gg\nu_i,\]
and $\nu_{i-x+1},\ldots,\nu_{i-1}\in \Pp$, we have by \eqref{Ordd} that 
 $\nu_{i-x}-x+1\gg \nu_i$, which is equivalent to $\nu_{i-x}-x+1+i-j\gg \nu_i-j+i$. We obtain by \eqref{sw} that
\begin{align*}
&\text{either}\quad\beta(\nu_{i-x}-x+1+i-j)\succ\alpha(\nu_i-j+i)\\
&\text{or}\quad\alpha(\nu_i-j+i)+1\gg\alpha(\nu_{i-x}-x+i-j)\succ\beta(\nu_{i-x}-x+i-j)\succ \beta(\nu_i-j+i)\,\cdot
\end{align*}
In any case, the splitting in \St occurs before $\beta(\nu_i-j+i)$. We set then 
\begin{align*}
\gamma^2 &= \nu_1\gg\cdots\gg\alpha(\nu_{i-x}-y)\,,\\
\mu^2&=\beta(\nu_{i-x}-y)\succ\cdots\succ \beta(\nu_i-j+i)\succ\nu_{j+1}\succ\cdots\succ\nu_t\,,
\end{align*}
where $y$ is the number of iterations of \So before moving to \Stt, 
and by reasoning as before for the different cases,  we can easily see that the conditions of \Prp{pr2} are respected. We obtain the result recursively. We also observe that the sequence $\gamma^u,\mu^u$ is exactly what we obtain by applying successively iteration of \So and \St of the transformation $\Phi$ on $\gamma^{u+1},\mu^{u+1}$.
\end{proof}
By the lemma, since the number of secondary parts decreases by one at each passage from \St to \Soo, we will stop after exactly the number of secondary parts in $\nu$. And the result is of the form 
$\gamma^U,\mu^U\in \Od$ with  $\gamma^U$ well-ordered by $\gg$, and the last part of the first partition and the first of the second partition are consecutive in terms of $\succ$. We then conclude that $\Psi(\Ee)\subset \Od$. Since all the steps are reversible by $\Phi$, we also have $\Phi\circ\Psi_{|\Ee}=Id_{\Ee}$. 

\section{Bijective proof of \Thm{th2}}\label{sct5}

In this section, we will describe a bijection for \Thm{th2}. For brevity, we refer to the partitions in \Thm{th2} as quaternary partitions.
\subsection{From $\Ee$ to quaternary partitions}

We consider the patterns $((k+1)_{ad},k_{bc}),(k_{cd},k_{ab})$ and we sum them as follows :
\begin{align}
(k+1)_{ad}+k_{bc}&= (2k+1)_{abcd}\nonumber\\
k_{cd}+k_{ab}&= 2k_{abcd}\,\label{decomp}\,\cdot
\end{align}
\paragraph{Let us now take a partition $\nu$ in $\Ee$}
We then identify all the patterns $(M^i,m^i)\in \{((k+1)_{ad},k_{bc}),(k_{cd},k_{ab})\}$ and suppose that 
\[\nu = \nu_1,\ldots,\nu_x,M^1,m^1,\nu_{x+1},\ldots,\nu_y,M^2,m^2,\nu_{y+1},\ldots, M^t,m^t,\ldots,\nu_s\,\cdot\]
\textit{As long as} we have a pattern $\nu_j,M^i,m^i$, we cross the parts by replacing them using 
\begin{equation}
\nu_j,M^i,m^i \longmapsto M^i+1,m^i+1,\nu_j-2\,\cdot
\end{equation}
At the end of the process, we obtain a final sequence 
\[N^1,n^1,N^2,n^2,\ldots,N^t,n^t,\nu'_1,\ldots,\nu'_s\,\cdot\]
Finally, the associated pair of partitions is set to be $(K^1,\ldots,K^t),\nu'=(\nu'_1,\ldots,\nu'_t)$, where 
$K^i=N^i+n^i$ according to  \eqref{decomp}.
\\To sum up the previous transformation, we only remark that, for each quaternary part $K^i$ obtained by summing of the original pattern $M^i,m^i$, we add twice the number of the remaining primary and secondary parts in $\nu$ to the left of the pattern that gave $K^i$, 
while we subtract from these parts two times the number of quaternary parts obtained by patterns that occur to their right.
\\\\With the example $11_c,\underline{10_{cd},10_{ab}},6_d,5_{ab},\underline{3_{ad},2_{bc}},1_a$,
\[
\begin{footnotesize}
\begin{array}{ccccccccccccc}\begin{matrix}
 11_c\\
 10_{cd}\\
 10_{ab}\\
 6_d\\
 5_{ab}\\
 3_{ad}\\
  2_{bc}\\
 1_a
\end{matrix}
&\mapsto&
\begin{matrix}
 11_c\\
 10_{cd},10_{ab}\\
 6_d\\
 5_{ab}\\
 3_{ad}, 2_{bc}\\
 1_a
\end{matrix}
&\mapsto&
\begin{matrix}
 11_{cd},11_{ab}\\
 9_c\\
 6_d\\
 5_{ab}\\
 3_{ad},2_{bc}\\
 1_a
\end{matrix}
&\mapsto&
\begin{matrix}
 11_{cd},11_{ab}\\
 9_c\\
 6_d\\
 4_{ad},3_{bc}\\
 3_{ab}\\
 1_a
\end{matrix}
&\mapsto&
\begin{matrix}
 11_{cd},11_{ab}\\
 9_c\\
 5_{ad},4_{bc}\\
 4_d\\
 3_{ab}\\
 1_a
\end{matrix}
&\mapsto&
\begin{matrix}
 11_{cd},11_{ab}\\
6_{ad},5_{bc}\\
 7_c\\
 4_d\\
 3_{ab}\\
 1_a
\end{matrix}
\end{array}\,\cdot
\end{footnotesize}
\]
 we obtain $[(22_{abcd},11_{abcd}),(7_c,4_d,3_{ab},1_a)]$. We now proceed to show that the
image of this mapping is indeed a quaternary partition.    The inverse
mapping will be presented in the next subsection.
\begin{enumerate}
\item \textbf{Quaternary parts are well-ordered}. Let us consider two consecutive patterns $(M^j,m^j)=(k_p,l_q)$ and $(M^{j+1},m^{j+1})=(k'_{p'},l'_{q'})$. Since $\nu$ is well-ordered by $\gg$, we have by \eqref{Ordd} and \eqref{Ord} that 
\begin{equation}
l_q\triangleright \l^1_{p_1}\triangleright\cdots\triangleright l^i_{p_i}\triangleright k'_{p'}\,\cdot
\end{equation}
By \eqref{Ord}, we then have that $l_q\succ k'_{p'}+i+1$ so that $l-k'\geq i+1+\chi(q\leq p')$.
 Since by \eqref{Ordd}, $k-l=\chi(p\leq q)$ and $k'-l'=\chi(p'\leq q')$, we then have  that 
 \begin{align*}
 k+l-(k'+l')&= \chi(p\leq q)+\chi(p'\leq q')+2(l-k')\\
            &\geq \chi(p\leq q)+\chi(p'\leq q')+2\chi(q\leq p')+2i+2
 \end{align*}
 and we obtain that 
 \begin{align*}
 \chi(cd\leq ab)+&\chi(cd\leq ab)+2\chi(ab\leq cd)= 2\\
 \chi(cd\leq ab)+&\chi(ad\leq bc)+2\chi(ab\leq ad)= 3\\
 \chi(ad\leq bc)+&\chi(cd\leq ab)+2\chi(bc\leq cd)= 3\\
 \chi(ad\leq bc)+&\chi(ad\leq bc)+2\chi(bc\leq ad)= 2\,,
 \end{align*}
 so that  $k+l-(k'+l')\geq 4+2i$. We will then have, after adding twice the remaining primary and secondary elements to their left, that the difference between two consecutive 
 quaternary parts will be at least $4$.\\
\item \textbf{The partition $\nu'$ is in $\Eee$}. Let us consider two consecutive elements $\nu_x=k_p, \nu_{x+1}=l_q$. We then have for consecutive patterns $M^u,m^u$ between $k_p$ and $l_q$  that 
\begin{equation}
k_p\triangleright M^i\gg m^i \gg \cdots \gg M^j\gg m^j\triangleright l_q\,\cdot
\end{equation}
For the case $(M^j, m^j, l_q) \neq (3_{ad},2_{bc},1_a)$, 
since by \Lem{lem3}, $M^{u}\succeq M^{u+1}+2$, $M^j\succeq l_q+2$, and by \eqref{Ordd}, we have that $k_p \succ M^i+1$, and then
\begin{equation}
k_p\succ 1+2(j-i+1)+l_q \Longrightarrow k_p \triangleright 2(j-i+1)+l_q\,\cdot
\end{equation}
For the case $(M^j, m^j, l_q) =(3_{ad},2_{bc},1_a)$, we obtain that 
\begin{equation}
k_p-2(j-i+1)+1\succ 3_{ad}
\end{equation}
and this means that $k_p-2(j-i+1)+1\succeq 3_a $ so that $k_p-2(j-i+1)\succeq 2_a\triangleright 1_a$ .
\\\\In any case, $k_p \triangleright 2(j-i+1)+l_p$, and this implies that  after the subtraction of twice the number of the quaternary parts obtained to their right, these parts will be well-ordered by $\triangleright$.\\
\item \textbf{The minimal quaternary part is well-bounded}. 
Let us first suppose that the tail of $\nu$ consists only of patterns $M^u,m^u$. We then have that
\[\nu_s\triangleright M^i\gg m^i \gg \cdots \gg M^t\gg m^t\]
and, by \Lem{lem3} and \eqref{Ordd}, $\nu_s-2(t-i+1)+1\succeq M^t\succeq 2_{cd}$, so that $\nu'_s=\nu_s-2(t-i+1)\succeq 1_{cd} \succ 1_a$.  
This means that $1_a\notin \nu'$. We also obtain that $K^t = M^t+m^t+2s \geq 2s+4$.
\\\\Now suppose that the tail of $\nu$ has the form
\begin{equation}
l_q\triangleright \nu_u\triangleright\cdots\triangleright \nu_s\,,
\end{equation}
with $M^t,m^t = k_p,l_q$. By \eqref{Ord}, we obtain that $l_q\succ \nu_s+s-u+1$.
\begin{itemize}
 \item If $\nu_s=1_a$, we then have 
\begin{align*}
k+l &= \chi(p\leq q)+2l\\
&\geq  \chi(p\leq q)+ 2(s-u+2+\chi(q\leq a))\\
&= 2(s-u+1)+ 2+\chi(p\leq q)+2\chi(q\leq a)\,,
\end{align*}
and with $(p,q)\in \{(ad,bc),(cd,ab)\}$ we have 
\begin{align*}
\chi(ad\leq bc )&+2\chi(bc\leq a) = 1\\
\chi(cd\leq ab)&+2\chi(ab\leq a)=2
\end{align*}
so that $k+l \geq 2(s-u+1)+ 3$. Then after the addition of $2(u-1)$ for the remaining primary and secondary parts of $\nu$ to the left of the pattern $(M^t,m^t)$, we obtain that the smallest quaternary part is at least $2s+3$. Note that $\nu'_s=\nu_s=1_a$.\\
\item When $\nu_s=h_r\neq 1_a$, we obtain that 
\begin{align*}
k+l&\geq  \chi(p\leq q)+ 2(s-u+1+h+\chi(q\leq r))\\
&= 2(s-u+1)+ 2h+\chi(p\leq q)+2\chi(q\leq r)\,,
\end{align*}
so that if $h\geq 2$, then $k+l\geq 2(s-u+1)+4$. If not, $h=1$, and since there is no secondary part of size $1$, we necessary have that $r\geq b$, so that $\chi(q\leq r)=1$ whenever $q\in\{ab,bc\}$. 
We thus obtain $k+l\geq 2(s-u+1)+4$. We then conclude that for $\nu_s\neq 1_a$, the smallest quaternary part is
at least $2s+4$.
\end{itemize}
In any case, we have that the smallest quaternary part is at least $2s+4-\chi(1_a\in \nu')$.
\end{enumerate}
\subsection{From quaternary partitions to $\Ee$}
Recall  by \eqref{decomp} that $K_{abcd}$ splits as follows :
\begin{align*}
(k+1)_{ad}+k_{bc}&= (2k+1)_{abcd}\\
k_{cd}+k_{ab}&= 2k_{abcd}\,
\end{align*}
Let  us then consider partitions 
$(K^1,\ldots,K^t)$ and $\nu=(\nu_1,\ldots,\nu_s)\in \Eee$, with quaternary part $K^u$ such that $K^t\geq 4+2s-\chi(1_a\in \nu)$ and $K^u-K^{u+1}\geq 4$.
We also set $K^u = (k^u,l^u)$ the decomposotion according to \eqref{decomp}. We then proceed as follows by beginning with $K^t$ and $\nu_1$, 
\begin{itemize}
\item[\Soo:] If we do not encounter $K^{u+1}=(k^{u+1},l^{u+1})$ and $\nu_i\neq 1_a$ and $\nu_i+2\triangleright k^u-1$, then replace
\begin{align*}
\nu_i&\longmapsto \nu_i+2\\
(k^u,l^u)&\longmapsto (k^u-1,l^u-1)
\end{align*}
and move to $i+1$ and redo \Soo. Otherwise, move to \Stt.
\item[\Stt] If we encounter $K^{u+1}= k^{u+1}\gg l^{u+1}$, then split $(k^u,l^u)$ into $k^u\gg l^u$.
If not, it means that we have met $\nu_i$ such that $\nu_i+2\not\triangleright\,\,\, k^u-1$. Then we split $k^u\gg l^u$. Since we have $\nu_i+2\not\triangleright \,\,k^u-1$, which is equivalent by \eqref{Ord} to $ k^u\succeq\nu_i+2$, by \Lem{lem3}, this is exactly the condition to avoid the forbidden patterns, with $k^u\gg l^u\triangleright \nu_i$.
\\We can now move to \So with $u-1$ and $i=1$.
\end{itemize}
With the example $[(22_{abcd},11_{abcd}),(7_c,4_d,3_{ab},1_a)]$, we obtain
\[
\begin{footnotesize}
\begin{array}{ccccccccccccccccc}
\begin{matrix}
 11_{cd},11_{ab}\\
6_{ad},5_{bc}\\
 7_c\\
 4_d\\
 3_{ab}\\
 1_a
\end{matrix}
&\mapsto&
\begin{matrix}
 11_{cd},11_{ab}\\
 9_c\\
 5_{ad},4_{bc}\\
 4_d\\
 3_{ab}\\
 1_a
\end{matrix}
&\mapsto&
\begin{matrix}
 11_{cd},11_{ab}\\
 9_c\\
 6_d\\
 4_{ad},3_{bc}\\
 3_{ab}\\
 1_a
\end{matrix}
&\mapsto&
\begin{matrix}
 11_{cd},11_{ab}\\
 9_c\\
 6_d\\
 5_{ab}\\
 3_{ad},2_{bc}\\
 1_a
\end{matrix}
&\mapsto&
\begin{matrix}
 11_{cd},11_{ab}\\
 9_c\\
 6_d\\
 5_{ab}\\
 3_{ad}\\
 2_{bc}\\
 1_a
\end{matrix}
&\mapsto&
\begin{matrix}
 11_c\\
 10_{cd},10_{ab}\\
 6_d\\
 5_{ab}\\
 3_{ad}\\
 2_{bc}\\
 1_a
\end{matrix}
&\mapsto&
\begin{matrix}
 11_c\\
 10_{cd}\\
 10_{ab}\\
 6_d\\
 5_{ab}\\
 3_{ad}\\
 2_{bc}\\
 1_a
\end{matrix}
\end{array}\,\cdot
\end{footnotesize}
\]
It is easy to check that when two quaternary parts meet in \Stt, we will always have $l^u\gg k^{u+1}$, since this is exactly the condition for the minimal difference $K^u-K^{u+1}\geq 4$ and they crossed the same number of $\nu_i$. We can also check that even if the minimal part crossed $\nu_1,\ldots,\nu_s\neq 1_a$, we will still have at the end $K^t\geq 4$ and for $\nu_s=1_a$, $K^t\geq 5$. We see with \eqref{decomp} that 
the size of $m^t$ is at least equal to $2$, and for the case $\nu_s=1_a$, $m^t$ is at least equal to $2_{bc}\gg 1_a$. The partition obtained is then in $\Ee$. 

\end{document}